\newtheorem{theorem}{Theorem}[section]
\newtheorem{lemma}[theorem]{Lemma}
\newtheorem{proposition}[theorem]{Proposition}
\newtheorem{corollary}[theorem]{Corollary}
\newtheorem{definition}[theorem]{Definition}
\newtheorem{remark}[theorem]{Remark}
\title{Dimensionality reduction and width of deep neural networks based on topological degree theory}
\author{Xiao-Song Yang\\
	School of Mathematics and Statistics,\\
	Huazhong University of Science and Technology,\\
	Wuhan 430074, P. R. China\\
	\emph{Hubei Key Laboratory of Engineering Modeling and Scientific Computing},\\
	Huazhong University of Science and Technology, Wuhan 430074, China\\
	\texttt{yangxs@hust.edu.cn}}
\date{}
\begin{document}
	
	\maketitle
	
	\begin{abstract}
		In this paper we present a mathematical framework on linking of embeddings of compact topological spaces into Euclidean spaces and separability of linked embeddings under a specific class of dimension reduction maps. As applications of the established theory, we provide some fascinating insights into classification and approximation problems in deep learning theory in the setting of deep neural networks.
	\end{abstract}
	
	\noindent
	\textbf{Keywords:} Link, homotopy, topological degree, dimension reduction map, lower bound of width, deep neural networks.
	
	\noindent
	\textbf{MSC 2020:} 55P99; 68T01, 68T07
	
	\section{Introduction}\label{Introduction}
	
	Dimensionality reduction (DR) and deep neural networks (DNNs) are two important aspects in data analysis. In data analysis and deep learning, the datasets are often high-dimensional and exhibit some complicated topological structures due to various backgrounds from science to engineering \cite{carlsson2009topology, fefferman2016testing, higham2019deep, hinton2006reducing, korman2018autoencoding, mohri2018foundations}. Traditional approaches to data analysis and visualization, in particular on images recognition, often fail in the high-dimensional setting, and a common practice is to perform dimensionality reduction \cite{fefferman2016testing, korman2018autoencoding, tenenbaum2000global} in order to make data analysis tractable and economic, and the DNNs is a powerful tool in dealing with non-linear dimensionality reduction problems.
	
	It has now been recognized that practical datasets often consists of features of low intrinsic dimensions with some nontrivial topological structures \cite{carlsson2009topology, fefferman2016testing, korman2018autoencoding}, and the geometric structure of datasets heavily affect the architecture of the deep neural networks. Nonetheless, how and to what extent the geometric (topological) structure of datasets is connected with the architecture of a deep neural network remains unclear and is an active research area of deep learning in recent years.
	
	In view of classification of dataset by machine learning, the following problems are of theoretical interests as well as practical significance.
	
	Let \(A_{i}\subset\mathbb{R}^{n},i=1,2,\cdots,k\) be connected compact sets that are mutually disjoint. Does there exist a linear map \(L:\mathbb{R}^{n}\rightarrow\mathbb{R}^{m}\), \(m\leq n\), such that \(L(A_{i})\cap L(A_{j})=\emptyset\) for all \(i\neq j\)?
	
	More generally, let \(A_{i}\subset\mathbb{R}^{n},i=1,2,\cdots,k\) be disjoint compact topological spaces, and \(f_{i}:A_{i}\rightarrow\mathbb{R}^{n},i=1,2,\cdots,k\) are embeddings satisfying \(f_{i}(A_{i})\cap f_{j}(A_{j})=\emptyset\). Under what conditions does a continuous (differentiable if possible) dimension reduction map \(F:\mathbb{R}^{n}\rightarrow\mathbb{R}^{m}\) (\(m\leq n\)) fail to have the property that \(F\circ f_{i}(A_{i})\cap F\circ f_{j}(A_{j})=\emptyset\) for all \(i\neq j\)? In addition, do there exist pair-wise disjoint balls \(D_{i}\subset\mathbb{R}^{n}\), so that \(f(A_{i})\subset D_{i},i=1,2,\cdots,k\)?
	
	A natural question is: What types of location of disjointed compacts in the ambient space are topological obstructions to the existence of dimension reduction maps having the above property?
	
	Here, by dimension reduction map we mean a continuous map from high dimensional Euclidean space to a lower dimensional Euclidean space.
	
	The questions above seem trivial without restriction to continuous maps in view of the famous Urysohn's Lemma \cite{runde2005taste}. However, if we confine the continuous maps within some reasonable classes (for instance, linear, or more generally, deep neural networks), then many problems arise. One important problem is the effect of topological structure of dataset on efficiency of DNNs in classifying datasets. Since depth and width are two important measures on complexity of DNNs, investigations of this problem will provide insight into the complexity of DNNs and in particular, explain why the \textbf{width} of DNNs is of paramount importance in DNNs, especially in autoencoder design, which involves the reducing the dimensionality of data with DNNs.
	
	The main results (in sections \ref{Inseparability of linked compact sets under a class of dimension reduction maps} and \ref{Applications to problems in deep learning}) can be roughly stated as follows.
	
	\textbf{Result 1} Assume that an activation function \(\mu:\mathbb{R}^{k}\rightarrow\mathbb{R}^{k}\) is group homotopic to a homeomorphism \(F:\mathbb{R}^{k}\to F(\mathbb{R}^{k})\subset\mathbb{R}^{k}\). If a DNN \(Net^{\mu}:\mathbb{R}^{n}\rightarrow\mathbb{R}^{m}\), \(n>m\) constructed with activation function \(\mu\) has its width not greater than \(n\), then it fails to classify two compacts \(A\) and \(B\) of \(\mathbb{R}^{n}\) provided \(A\) and \(B\) are a link pair with nonzero degree.
	
	Here the activation map \(\mu:\mathbb{R}^{k}\rightarrow\mathbb{R}^{k}\) is defined coordinate-wisely by a scalar activation function \(\mu:\mathbb{R}\rightarrow\mathbb{R}\).
	
	The implication of the above statement is that an autoencoder neural networks with activation maps above having neck shaped architecture can not tell two labelled datasets apart if the datasets located in input space in some topologically complicated way.
	
	\textbf{Result 2} Let \(D^{n}\subset\mathbb{R}^{n}\) be the standard unit ball with \(n>1\), then for every \(\delta>0\), there is a continuous function \(f:D^{n}\rightarrow\mathbb{R}\), such that for each \(Net^{\mu}:D^{n}\rightarrow\mathbb{R}\), with width not greater than \(n\) but with arbitrary depth, we have
	\[\|Net^{\mu}(x)-f(x)\|\geq\delta,\quad x\in D^{n}\]
	Provided that the activation function \(\mu:\mathbb{R}^{k}\rightarrow\mathbb{R}^{k}\) is group homotopic to a homeomorphism \(F:\mathbb{R}^{k}\to F(\mathbb{R}^{k})\subset\mathbb{R}^{k}\).
	
	Since almost currently used activation functions such as Relu activation, a-leaky Relu activation, Elu activation, and all sigmoid-type monotone activation are group homotopic to a homeomorphism, it is easy to see from \textbf{result 2} that any DNN with such an activation fails to approximate some continuous function if its width is not greater than the dimension of the input space. This implies that to effectively approximate continuous functions defined on a compact domain in \(\mathbb{R}^{n}\), the lower bound of width of DNNs is at least \(n+1\). In \cite{hanin2017approximating}, The authors obtained this bound only for Relu.
	
	The rest of this paper is organized as follows. In section \ref{Links and link pairs} we establish a theory for describing the entangle of two disjoint compact sets in some Euclidean space in terms of homotopy and links and provide some criteria for two compacts being a link pair. In section \ref{A special homotopy} we introduce the notion of group homotopy to study some properties of continuous maps that are group homotopic to identity or a homeomorphism. Section \ref{Inseparability of linked compact sets under a class of dimension reduction maps} characterize a class of dimension reduction maps that fail to separate linked compact sets.
	
	For reader's convenience, we try to make the arguments as elementary as possible. But the results obtained seem very exciting from both perspectives of theoretical and application oriented investigations! As pleasant applications of the established theory in this paper, we show how the shape (topological structure) of the underlying datasets affects the architecture of deep neural networks in classification task and approximation ability.
	
	\section{Links and link pairs}\label{Links and link pairs}
	
	In this section we will present some preliminaries for link theory of subsets in Euclidean space in terms of embeddings and we are mainly concerned about under what conditions two embeddings into a Euclidean space are linked or not in order to give a mathematical theory for understanding deep neural networks.
	
	\subsection{Link in terms of homotopy}
	
	Let us first give some notions of link in terms of homotopy.
	
	Let \(A\) and \(B\) are two compact (connected) topological spaces and assume that \(f:A\rightarrow\mathbb{R}^{n}\) and \(g:B\rightarrow\mathbb{R}^{n}\) are embeddings. Throughout this paper, unless otherwise stated, we assume that \(f(A)\cap g(B)=\emptyset\).
	
	\begin{definition}\label{def:2.1}
		If there exists a continuous map \(H:A\times[0,1]\rightarrow\mathbb{R}^{n}\) such that
		\[H(x,0)=f(x),\quad x\in A\]
		and \(H(A,1)\) is satisfying the following property: there are two disjoint domains \(D_{1}\) and \(D_{2}\) in \(\mathbb{R}^{n}\) homeomorphic to the standard unit ball in \(\mathbb{R}^{n}\) such that \(H(A,1)\subset D_{1}\), \(g(B)\subset D_{2}\) and
		\[H(A,t)\cap g(B)=\emptyset,\quad\forall t\in[0,1].\]
		Then \(f\) and \(g\) are said to be unlinked. Otherwise, \(f\) and \(g\) are said to be linked.
	\end{definition}
	
	More generally we can give the link notion as follows.
	
	\begin{definition}\label{def:2.2}
		Suppose that there is a homotopy
		\[H:(A\cup B)\times[0,1]\rightarrow\mathbb{R}^{n}\]
		such that \(H(x,y,0)=f(x)\cup g(y)\), \(x\in A\), \(y\in B\), and there are two compact balls \(D\subset\mathbb{R}^{n}\) and \(\bar{D}\subset\mathbb{R}^{n}\) satisfying \(D\cap\bar{D}=\emptyset\) such that
		\[H(A,1)\subset D\quad\text{and}\quad H(B,1)\subset\bar{D}\]
		In addition \(H(A,t)\cap H(B,t)=\emptyset\), \(\forall t\in[0,1]\) then \(f\) and \(g\) are said to be unlinked. Otherwise \(f\) and \(g\) are said to be linked.
	\end{definition}
	
	Equivalently, the notion of link of two disjoint compact set can also be given in the following form.
	
	\begin{definition}\label{def:2.3}
		Suppose that any homotopy
		\[H:(A\cup B)\times[0,1]\rightarrow\mathbb{R}^{n}\]
		with \(H(x,y,0)=f(x)\cup g(y)\), \(x\in A\), \(y\in B\), satisfying \(H(A,1)\subset D\), \(H(B,1)\subset\bar{D}\) and \(D\cap\bar{D}=\emptyset\) where \(D\subset\mathbb{R}^{n}\) and \(\bar{D}\subset\mathbb{R}^{n}\) are two balls, implies that
		\[H(A,t)\cap H(B,t)\neq\emptyset\quad\text{for some}\quad t\in[0,1],\]
		then \(f\) and \(g\) are said to be linked.
	\end{definition}
	
	This equivalent definition intuitively means that if \(f\) and \(g\) are linked, then in the process of deformation of \(f(A)\) and \(g(B)\) to the final locations meeting the above requirements, there must be a time \(t\in[0,1]\) such that the moving images of \(f(A)\) and \(g(B)\) contact each other at \(t\).
	
	For convenience, \(f(A)\) and \(g(B)\) are called link pair if they are linked. In the case that \(f\) and \(g\) are inclusions, we just say \(A\) and \(B\) are a link pair. The following fact is obvious.
	
	\begin{proposition}\label{prop:2.1}
		Suppose that there are continuous maps \(\phi:S^{k}\to A\) and \(\varphi:S^{h}\to B\) with \(k+h=n-1\) such that \(f\circ\phi:S^{k}\rightarrow\mathbb{R}^{n}\) and \(g\circ\varphi:S^{h}\rightarrow\mathbb{R}^{n}\) are linked, then \(f(A)\) and \(g(B)\) are a link pair.
	\end{proposition}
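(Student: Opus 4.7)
The natural approach is a contrapositive (functoriality) argument. I will assume that $f(A)$ and $g(B)$ are unlinked and pull back the separating homotopy through $\phi$ and $\varphi$ to show that $f\circ\phi$ and $g\circ\varphi$ must then also be unlinked, contradicting the hypothesis. The fact that unlinking is defined via an ambient homotopy whose only requirement is non-intersection along the way makes this ``precomposition'' strategy essentially the only natural move.

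Concretely, suppose $f(A)$ and $g(B)$ are unlinked. By Definition~\ref{def:2.2} there is a homotopy $H:(A\cup B)\times[0,1]\to\mathbb{R}^n$ with $H(\cdot,0)=f$ on $A$ and $H(\cdot,0)=g$ on $B$, two disjoint balls $D,\bar D\subset\mathbb{R}^n$ such that $H(A,1)\subset D$ and $H(B,1)\subset\bar D$, and $H(A,t)\cap H(B,t)=\emptyset$ for every $t\in[0,1]$. I then define
\[\tilde H:(S^k\cup S^h)\times[0,1]\to\mathbb{R}^n,\qquad \tilde H(u,t)=H(\phi(u),t)\text{ on }S^k,\quad \tilde H(v,t)=H(\varphi(v),t)\text{ on }S^h.\]
Continuity is automatic from continuity of $H$, $\phi$, $\varphi$. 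At $t=0$ one has $\tilde H(u,0)=f\circ\phi(u)$ and $\tilde H(v,0)=g\circ\varphi(v)$, which is the required initial condition. At $t=1$ the inclusions $\phi(S^k)\subset A$ and $\varphi(S^h)\subset B$ give $\tilde H(S^k,1)=H(\phi(S^k),1)\subset H(A,1)\subset D$ and similarly $\tilde H(S^h,1)\subset\bar D$. For intermediate $t$, the same inclusions yield $\tilde H(S^k,t)\cap\tilde H(S^h,t)\subset H(A,t)\cap H(B,t)=\emptyset$. Thus $\tilde H$ witnesses that $f\circ\phi$ and $g\circ\varphi$ are unlinked, contradicting the assumption.

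There is not really a hard step here, and in particular the dimension condition $k+h=n-1$ is not needed for the argument; it plays the role of making the hypothesis ``$f\circ\phi$ and $g\circ\varphi$ are linked'' substantive, since that is the classical codimension-one regime where linking numbers of spheres in $\mathbb{R}^n$ are nontrivial. The only place to be careful is the bookkeeping of domains: one must set $\tilde H$ up on $S^k\sqcup S^h$ and not forget that the conclusion about $H(\phi(S^k),t)\cap H(\varphi(S^h),t)$ follows from the set-level inclusion into $H(A,t)\cap H(B,t)$, which is the single line doing the real work.
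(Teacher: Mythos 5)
Your proof is correct and follows essentially the same route as the paper: argue by contradiction, take the unlinking homotopy for $f(A)$ and $g(B)$, and restrict it along $\phi$ and $\varphi$ to exhibit an unlinking of $f\circ\phi$ and $g\circ\varphi$. The only cosmetic difference is that you phrase the argument via Definition~\ref{def:2.2} (both sets moving) while the paper uses Definition~\ref{def:2.1} (only $A$ moving, $g(B)$ fixed); the restriction step is identical in either formulation.
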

	
	\begin{proof}
		If not, then there is a homotopy as defined in Definition~\ref{def:2.1} such that \(H(A,1)\subset D_{1}\), \(g(B)\subset D_{2}\) with \(D_{1}\cap D_{2}=\emptyset\) and \(H(A,t)\cap g(B)=\emptyset\), for all \(t\in[0,1]\). In particular
		\[H(\phi(S^{k}),1)\subset D_{1}\quad\text{and}\quad g\circ\varphi(S^{h})\subset D_{2}\]
		and
		\[H(\phi(S^{k}),t)\cap g\circ\varphi(S^{h})=\emptyset,\quad\forall t\in[0,1].\]
		Therefore \(f\circ\phi:S^{k}\rightarrow\mathbb{R}^{n}\) and \(g\circ\varphi:S^{h}\rightarrow\mathbb{R}^{n}\) are not linked, a contradiction.
	\end{proof}
	
	By a standard elementary argument from differential topology (using one parameter diffeomorphism group generated from some ordinary differential equations) it is easy to prove the following fact.
	
	\begin{proposition}\label{prop:2.2}
		Let \(M\) and \(N\) are two compact topological spaces (smooth manifolds) with \(\dim M<n-1\) and \(\dim N<n-1\). Assume that \(f:M\rightarrow\mathbb{R}^{n}\) and \(g:N\rightarrow\mathbb{R}^{n}\) are smooth embeddings. If there is a homotopy \(H:M\times[0,1]\rightarrow\mathbb{R}^{n}\) satisfying
		\[H(x,0)=f(x),\quad H(M\times[0,1])\cap g(N)=\emptyset\]
		and there is an open domain \(D\) homeomorphic to the unit disk \(D^{n}=\{x\in\mathbb{R}^{n}:\|x\|<1\}\), such that \(H(M,1)\subset D\) and \(g(N)\cap D=\emptyset\), then \(f\) and \(g\) are not a link pair.
	\end{proposition}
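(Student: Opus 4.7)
I would prove that $f$ and $g$ are unlinked in the sense of Definition~\ref{def:2.2} by constructing an explicit homotopy of $M\cup N$ that carries both embeddings into two disjoint open balls while keeping their moving images disjoint at every instant. The idea is to chain two deformations: first use the given $H$ to push $f(M)$ into $D$ while $g(N)$ stands still, then hold the image of $M$ frozen inside $D$ while contracting $g(N)$ into a small ball $D_{2}$ chosen disjoint from $\bar{D}$. The concatenation of the two will be the required homotopy, and one may take $D_{1}=D$.

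The first phase is essentially given: on $M$ run the homotopy $H$, on $N$ run the constant homotopy. The disjointness $H(M,t)\cap g(N)=\emptyset$ follows from the hypothesis $H(M\times[0,1])\cap g(N)=\emptyset$. After this phase $H(M,1)\subset D$ and $g(N)\subset\mathbb{R}^{n}\setminus\bar{D}$. For the second phase, pick a point $p$ far from $\bar{D}$ and a small open ball $D_{2}$ about $p$ with $D_{2}\cap\bar{D}=\emptyset$; I then need a homotopy $G:N\times[0,1]\to\mathbb{R}^{n}\setminus\bar{D}$ with $G(\,\cdot\,,0)=g$ and $G(N,1)\subset D_{2}$. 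Since $\mathbb{R}^{n}\setminus\bar{D}$ has the homotopy type of $S^{n-1}$ and $N$ is a compact smooth manifold of dimension at most $n-2$, the obstruction groups $H^{k+1}(N;\pi_{k}(S^{n-1}))$ vanish for every $k\leq\dim N$, so $g$ is null-homotopic in $\mathbb{R}^{n}\setminus\bar{D}$; a small smooth perturbation produces such a $G$ with $G(N,1)\subset D_{2}$. Pairing $G$ with the constant homotopy on $H(M,1)\subset D$ gives the second phase; throughout it the two moving images lie in the disjoint open sets $D$ and $\mathbb{R}^{n}\setminus\bar{D}$, and at the endpoints $H(M,1)\subset D_{1}$ and $G(N,1)\subset D_{2}$ with $D_{1}\cap D_{2}=\emptyset$, so Definition~\ref{def:2.2} is satisfied.

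The hardest step is the contraction of $g(N)$ inside $\mathbb{R}^{n}\setminus\bar{D}$, and this is exactly where the codimension hypothesis $\dim N<n-1$ is essential: if $\dim N$ equalled $n-1$, then $g(N)$ could separate $\mathbb{R}^{n}$ by Jordan--Brouwer and enclose $\bar{D}$, obstructing any contraction inside the complement. In the spirit of the paper's remark about one-parameter diffeomorphism groups, this step can be realized concretely by an ODE: construct a smooth vector field on $\mathbb{R}^{n}$ pointing toward $p$ on a neighbourhood of $g(N)$ and tapered to vanish near $\partial D$, and use transversality (valid because $\dim N\leq n-2$) to ensure the integral curves issuing from $g(N)$ reach $D_{2}$ without entering $\bar{D}$. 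The time-$1$ flow of this field furnishes $G$, and the remainder of the argument is routine verification that the concatenated map is continuous and satisfies all the conditions of Definition~\ref{def:2.2}.
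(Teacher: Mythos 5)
The paper never actually writes out a proof of Proposition~\ref{prop:2.2}; it only asserts that the result follows ``by a standard elementary argument from differential topology (using one parameter diffeomorphism group generated from some ordinary differential equations).'' Your proposal therefore supplies genuine content where the paper gives only a hint. Your architecture --- concatenate the given homotopy of \(M\) (with \(g(N)\) held fixed) with a contraction of \(g(N)\) into a small ball (with the image of \(M\) held fixed inside \(D\)) --- is the natural way to verify Definition~\ref{def:2.2}, and the decisive step is exactly where you place it: since \(S^{n-1}\) is \((n-2)\)-connected, every map from a complex of dimension at most \(n-2\) into it is null-homotopic, which is where \(\dim N<n-1\) enters. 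The obstruction-theoretic justification is correct up to a harmless indexing point (the obstructions to null-homotopy live in \(H^{k+1}(CN,N;\pi_{k}(S^{n-1}))\cong\tilde H^{k}(N;\pi_{k}(S^{n-1}))\), which vanish for the same reason you give); also, once \(g\) is null-homotopic the terminal image is a single point, so no ``small smooth perturbation'' is needed to land in a ball. Your closing ODE realization is in the spirit of the paper's one-line hint, so the two approaches are compatible rather than divergent; yours is simply the worked-out version.

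The one genuine soft spot is the claim that \(\mathbb{R}^{n}\setminus\bar D\) has the homotopy type of \(S^{n-1}\). The proposition only assumes \(D\) is an open domain \emph{homeomorphic} to the unit ball, and for a wild ball (e.g.\ the interior of an Alexander horned ball in \(\mathbb{R}^{3}\)) the complement of the closure is acyclic but need not be simply connected, so a loop \(g(N)\) could fail to contract there. If \(D\) is tame (a round or convex ball, which is surely what the paper intends) your argument is complete. In general the step is easily repaired by exploiting the freedom in Definition~\ref{def:2.2} to move \emph{both} sets: first contract \(H(M,1)\) to a point \(q\) inside the contractible set \(D\) (this homotopy stays in \(D\), hence disjoint from \(g(N)\subset\mathbb{R}^{n}\setminus D\)), and then contract \(g(N)\) inside \(\mathbb{R}^{n}\setminus\{q\}\simeq S^{n-1}\), where the \((n-2)\)-connectivity argument applies verbatim; finish with disjoint small balls about \(q\) and about the terminal point of the second contraction. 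With that adjustment the proof is sound.
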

	
	\section{A special homotopy}\label{A special homotopy}
	
	For the sequel, we define a special homotopy that has the following property.
	
	\begin{definition}\label{def:3.1}
		Let \(X\) be a topological space. If two maps \(f\), \(g:X\to X\) are said to be group homotopic if there exists a homotopy \(H:X\times[0,1]\to X\) such that
		\[H(x,0)=f(x),\quad H(x,1)=g(x),\quad\forall x\in X\]
		and for \(t_{1},t_{2}\in[0,1]\) with \(t_{1}+t_{2}\in[0,1]\), the following holds
		\[H(x,t_{1}+t_{2})=H(x,t_{2})\circ H(x,t_{1})=H(H(x,t_{1}),t_{2}).\]
		We coin this term because the additive operation of the group of real numbers can be transformed to the composition operation of maps.
	\end{definition}
	
	An example of group homotopy can be derived from the flow \(\Phi(x,t)\) generated by ordinary differential equations \(\dot{x}=v(x)\). Fix \(t\neq 0\), the map \(H:\mathbb{R}^{n}\times[0,1]\rightarrow\mathbb{R}^{n}\) defined by \(H(x,s)=\Phi(x,st)\) is a homotopy from the identity to the map \(\Phi(x,t)\).
	
	Now we have the following fact.
	
	\begin{proposition}\label{prop:3.1}
		Suppose that a continuous map \(F:\mathbb{R}^{n}\rightarrow\mathbb{R}^{n}\) is group homotopic to the identity, and two embeddings \(F:A\rightarrow\mathbb{R}^{n}\) and \(G:B\rightarrow\mathbb{R}^{n}\) with \(f(A)\cap g(B)=\emptyset\) are a link pair. Then, \(F\circ f\) and \(F\circ g\) are a link pair or have a nonempty intersection.
		\[F\circ f\cap F\circ g\neq\emptyset\]
	\end{proposition}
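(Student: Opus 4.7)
The plan is to argue by contradiction. Without loss of generality assume $F\circ f(A)\cap F\circ g(B)=\emptyset$, since otherwise the conclusion holds immediately; the goal is then to show that $F\circ f$ and $F\circ g$ must still form a link pair. I would suppose instead that they are unlinked and build a homotopy exhibiting $f$ and $g$ themselves as unlinked, which would contradict the hypothesis.

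Let $H:\mathbb{R}^{n}\times[0,1]\to\mathbb{R}^{n}$ be the group homotopy with $H(x,0)=x$ and $H(x,1)=F(x)$. By Definition~\ref{def:2.2} applied to the supposedly unlinked pair $F\circ f,\,F\circ g$, there exists a homotopy $K:(A\cup B)\times[0,1]\to\mathbb{R}^{n}$ starting at $F\circ f\cup F\circ g$, ending with $K(A,1)\subset D$ and $K(B,1)\subset\bar{D}$ in disjoint balls, and satisfying $K(A,t)\cap K(B,t)=\emptyset$ throughout. The natural candidate witness for unlinking of $f,g$ is the concatenation: on $[0,\tfrac{1}{2}]$ push along $H(\cdot,2t)$, moving $f(x)$ to $F\circ f(x)$ and $g(y)$ to $F\circ g(y)$; on $[\tfrac{1}{2},1]$ continue along $K$ with suitable reparametrization.

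The hard part, and essentially the only nontrivial step, is to verify that the first half of this concatenation keeps the $A$-piece and the $B$-piece disjoint at every intermediate time, i.e.\ that $H(f(x),s)\neq H(g(y),s)$ for every $x\in A,\ y\in B,\ s\in[0,1]$. This is precisely where the group structure of Definition~\ref{def:3.1} is indispensable: decomposing $1=s+(1-s)$ with both summands in $[0,1]$, the group law yields $F=H(\cdot,1)=H(\cdot,1-s)\circ H(\cdot,s)$, so any equality $H(f(x),s)=H(g(y),s)$ would propagate, upon applying $H(\cdot,1-s)$ to both sides, to $F(f(x))=F(g(y))$, contradicting the standing reduction $F\circ f(A)\cap F\circ g(B)=\emptyset$. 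With this disjointness secured along $[0,\tfrac{1}{2}]$, the concatenated homotopy is a bona fide witness of unlinking in the sense of Definition~\ref{def:2.2}, contradicting the hypothesis that $f$ and $g$ are a link pair, and completing the proof.
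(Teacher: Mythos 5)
Your proof is correct and uses essentially the same two ingredients as the paper's: the concatenation of the group homotopy $H$ with the putative unlinking homotopy of $F\circ f,\ F\circ g$, and the group-law identity $F = H(\cdot,1-s)\circ H(\cdot,s)$ to control disjointness at intermediate times. The only difference is organizational — the paper splits into two cases according to whether $H(f(A),t)\cap H(g(B),t)$ is empty for all $t$, whereas you reduce at the outset to the case $F\circ f(A)\cap F\circ g(B)=\emptyset$ and derive the intermediate disjointness as the contrapositive of the paper's second case; the mathematical content is identical.
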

	
	\begin{proof}
		Let \(H:\mathbb{R}^{n}\times[0,1]\rightarrow\mathbb{R}^{n}\) be the group homotopy from the identity id to \(F\).
		
		On the one hand, suppose that
		\[H(f(A),t)\cap H(g(B),t)=\emptyset\quad\forall t\in[0,1].\]
		Since \(f(A)\) and \(g(B)\) are a link pair, it is easy to construct a homotopy to show that \(F\circ f(A)\) and \(F\circ g(B)\) are also a link pair. In fact, if this is not the case, then there is a homotopy as defined in Definition~\ref{def:2.2},
		\[\bar{H}:(A\cup B)\times[0,1]\rightarrow\mathbb{R}^{n}\]
		\[\tilde{H}(x,y,0)=F\circ f(x)\cup F\circ g(y),x\in A,y\in B,\]
		Now construct \(\hat{H}:(A\cup B)\times[0,1]\rightarrow\mathbb{R}^{n}\) by \(H\) and \(\bar{H}\):
		\[\hat{H}(x,y,t)=H(x,y,2t)\quad x\in A,y\in B,t\in[0,\frac{1}{2}]\]
		\[\hat{H}(x,y,t)=\bar{H}(x,y,2t-1)\quad x\in A,y\in B,t\in[\frac{1}{2},1]\]
		showing that \(f(A)\) and \(g(B)\) are not a link pair, contradicting the link pair assumption.
		
		On the other hand, if there exists a \(\bar{t}\in[0,1]\) such that \(H(f(A),\bar{t})\cap H(g(B),\bar{t})\neq\emptyset\), then by Definition~\ref{def:3.1}, taking \(\bar{t}=1-\bar{t}\) we have
		\begin{align*}
			F(f(A))\cap F(g(B)) &= H(f(A),1)\cap H(g(B),1) \\
			&= H(H(f(A),\bar{t}),\bar{t})\cap H(H(g(B),\bar{t}),\bar{t}) \\
			&\supset H(H(f(A),\bar{t})\cap H(g(B),\bar{t}),\bar{t}) \neq\emptyset
		\end{align*}
		Thus we complete the proof.
	\end{proof}
	
	The following fact is obvious.
	
	\begin{proposition}\label{prop:3.2}
		For any homeomorphism \(F:\mathbb{R}^{n}\rightarrow\mathbb{R}^{n}\), if two embeddings \(f:A\rightarrow\mathbb{R}^{n}\) and \(g:B\rightarrow\mathbb{R}^{n}\) are a link pair, then so are the embeddings \(F\circ f\) and \(F\circ g\).
	\end{proposition}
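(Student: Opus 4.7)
The plan is to argue by contradiction using the homotopy characterization of ``unlinked'' in Definition~\ref{def:2.2}. Suppose, toward contradiction, that $F\circ f$ and $F\circ g$ are not a link pair. Then there exists a continuous homotopy $\bar{H}:(A\cup B)\times[0,1]\to\mathbb{R}^{n}$ with $\bar{H}(\cdot,\cdot,0)=F\circ f\cup F\circ g$, with $\bar{H}(A,t)\cap\bar{H}(B,t)=\emptyset$ for every $t\in[0,1]$, and with $\bar{H}(A,1)\subset D$ and $\bar{H}(B,1)\subset\bar{D}$ for two disjoint balls $D,\bar{D}\subset\mathbb{R}^{n}$.

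The key step is to transport this homotopy back by $F^{-1}$, which is continuous since $F$ is a homeomorphism of $\mathbb{R}^{n}$. Define $\tilde{H}:=F^{-1}\circ\bar{H}$. Then $\tilde{H}(\cdot,\cdot,0)=F^{-1}\circ F\circ f\cup F^{-1}\circ F\circ g=f\cup g$. Because $F^{-1}$ is a bijection, for each $t\in[0,1]$ one has
\[\tilde{H}(A,t)\cap\tilde{H}(B,t)=F^{-1}(\bar{H}(A,t))\cap F^{-1}(\bar{H}(B,t))=F^{-1}\!\bigl(\bar{H}(A,t)\cap\bar{H}(B,t)\bigr)=\emptyset.\]
Moreover, $\tilde{H}(A,1)\subset F^{-1}(D)$ and $\tilde{H}(B,1)\subset F^{-1}(\bar{D})$, and $F^{-1}(D)$, $F^{-1}(\bar{D})$ are disjoint topological balls, as homeomorphic images of balls under $F^{-1}$. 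Applying Definition~\ref{def:2.2} to $\tilde{H}$ then shows that $f$ and $g$ are unlinked, contradicting the link-pair hypothesis.

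The only point that could be considered an obstacle is the interpretation of the word ``ball'' in Definition~\ref{def:2.2}. Read as ``topological ball'' (consistent with the phrasing in Definition~\ref{def:2.1}), the argument above is immediate. If instead one insists on round Euclidean balls, a cosmetic post-composition step is required: one appends to $\tilde{H}$ an ambient isotopy of $\mathbb{R}^{n}$ supported in a small neighbourhood of $F^{-1}(D)\cup F^{-1}(\bar{D})$ that pushes these two disjoint compact topological balls into disjoint round balls, which is a standard consequence of the fact that any compact set contained in an open set of $\mathbb{R}^{n}$ can be ambiently isotoped into an arbitrarily small round ball. Either reading yields the desired conclusion, which is why the statement is legitimately called obvious.
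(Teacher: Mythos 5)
The paper offers no proof of Proposition~\ref{prop:3.2} at all --- it simply declares the fact ``obvious'' --- so there is nothing to compare against except the intended folklore argument, and your proposal supplies exactly that argument correctly: assume $F\circ f$ and $F\circ g$ are unlinked, pull the witnessing homotopy $\bar H$ back through the continuous inverse $F^{-1}$, use injectivity of $F^{-1}$ to convert $\bar H(A,t)\cap\bar H(B,t)=\emptyset$ into $\tilde H(A,t)\cap\tilde H(B,t)=\emptyset$, and observe that $F^{-1}(D)$, $F^{-1}(\bar D)$ are disjoint topological balls containing the time-one images, contradicting the hypothesis that $f$ and $g$ are linked. Given that Definition~\ref{def:2.1} explicitly speaks of domains ``homeomorphic to the standard unit ball,'' the topological-ball reading is clearly the one the paper intends, so your main argument is complete as written.

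One small caution on your closing remark: the assertion that ``any compact set contained in an open set of $\mathbb{R}^{n}$ can be ambiently isotoped into an arbitrarily small round ball'' is stated too strongly, and the naive straight-line contraction of $F^{-1}(D)$ toward a point could sweep through $F^{-1}(\bar D)$, so keeping the two families disjoint during such a correction requires more care than you suggest. Fortunately this step is not needed under the paper's own conventions, so it is best treated as a dispensable aside rather than part of the proof.
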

	
	\begin{remark}\label{rem:3.1}
		In fact, the statement in Proposition~\ref{prop:3.2} holds true as long as \(F:\mathbb{R}^{n}\to F(\mathbb{R}^{n})\subset\mathbb{R}^{n}\) is a homeomorphism.
	\end{remark}
	
	\section{Inseparability of linked compact sets under a class of dimension reduction maps}\label{Inseparability of linked compact sets under a class of dimension reduction maps}
	
	As previous sections, throughout this section we consider two embeddings \(f:A\rightarrow\mathbb{R}^{n}\) and \(g:B\rightarrow\mathbb{R}^{n}\) with \(f(A)\cap g(B)=\emptyset\), and assume that \(f\) and \(g\) are a link pair.
	
	\begin{definition}\label{def:4.1}
		Let \(A_{1},A_{2},\cdots,A_{k}\) be compact subsets of \(\mathbb{R}^{n}\). A continuous map \(F:\mathbb{R}^{n}\rightarrow\mathbb{R}^{m}\) is said to separate \(A_{1},A_{2},\cdots,A_{k}\), if there exist mutually disjoint embedded balls \(D_{1},D_{2},\cdots,D_{k}\) in \(\mathbb{R}^{m}\) such that \(F(A_{i})\subset D_{i}\), \(i=1,2,\cdots,k\).
	\end{definition}
	
	Now in comparison with Proposition~\ref{prop:3.1} we can have a more general statement.
	
	\begin{theorem}\label{thm:4.1}
		Consider a link pair of embeddings \(f:A\rightarrow\mathbb{R}^{n}\) and \(g:B\rightarrow\mathbb{R}^{n}\). If a continuous map \(F:\mathbb{R}^{n}\rightarrow\mathbb{R}^{n}\) is group homotopic to a homeomorphism, then it can not separate \(f(A)\) and \(g(B)\).
	\end{theorem}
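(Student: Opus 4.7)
The plan is to argue by contradiction and mirror the two-case dichotomy used in the proof of Proposition~\ref{prop:3.1}, replacing the role of the identity by the homeomorphism via Proposition~\ref{prop:3.2} and Remark~\ref{rem:3.1}. Assume that $F$ separates $f(A)$ and $g(B)$ in the sense of Definition~\ref{def:4.1}: fix disjoint embedded balls $D_1, D_2 \subset \mathbb{R}^n$ with $F(f(A)) \subset D_1$ and $F(g(B)) \subset D_2$. Let $\Phi$ be the homeomorphism to which $F$ is group homotopic, and let $H:\mathbb{R}^n\times[0,1]\to\mathbb{R}^n$ be the corresponding group homotopy, normalized so that $H(\cdot,0)=\Phi$ and $H(\cdot,1)=F$. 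By Proposition~\ref{prop:3.2} (or Remark~\ref{rem:3.1} if $\Phi$ is only a homeomorphism onto its image), $\Phi\circ f$ and $\Phi\circ g$ still form a link pair.

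Next I dichotomize on whether the two trajectories under $H$ stay disjoint. In the first case, if $H(f(A),t)\cap H(g(B),t)=\emptyset$ for every $t\in[0,1]$, then $H$ restricted to $(A\cup B)\times[0,1]$ via $f$ and $g$ is itself a homotopy that starts at $\Phi(f(A))\cup\Phi(g(B))$, keeps the two pieces apart at every intermediate time, and places them at $t=1$ inside the disjoint balls $D_1$ and $D_2$. This is precisely the configuration forbidden by Definition~\ref{def:2.2} for a link pair, contradicting what was just established for $\Phi\circ f$ and $\Phi\circ g$.

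In the second case, if there exists $\bar t\in[0,1]$ with $H(f(A),\bar t)\cap H(g(B),\bar t)\neq\emptyset$, I will exploit the semigroup identity $H(x,1)=H(H(x,\bar t),1-\bar t)$ from Definition~\ref{def:3.1}: any common point is transported by $H(\cdot,1-\bar t)$ into $F(f(A))\cap F(g(B))$, so the two images share a point and cannot possibly lie in the disjoint balls $D_1$ and $D_2$. The step requiring the most care is the first case, where one must check that the restriction of $H$ genuinely meets the exact endpoint and disjointness requirements of Definition~\ref{def:2.2}, with the disjoint target balls at time $1$ supplied precisely by the separation hypothesis. The second case and the initial invocation of Remark~\ref{rem:3.1} are then routine, essentially the group-property bookkeeping already carried out inside Proposition~\ref{prop:3.1}.
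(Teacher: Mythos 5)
Your proof is correct and follows essentially the same route as the paper: the same two-case dichotomy on whether $H(f(A),t)$ and $H(g(B),t)$ remain disjoint, with Proposition~\ref{prop:3.2} supplying that $\Phi\circ f$ and $\Phi\circ g$ are still a link pair and the group property of Definition~\ref{def:3.1} transporting any intermediate intersection point into $F(f(A))\cap F(g(B))$. Your first case is in fact slightly more streamlined than the paper's (you use the separating balls from the hypothesis directly as the terminal balls in Definition~\ref{def:2.2}, rather than first concluding that $F\circ f$ and $F\circ g$ form a link pair), but the underlying argument is the same.
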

	
	\begin{proof}
		Let \(H:\mathbb{R}^{n}\times[0,1]\rightarrow\mathbb{R}^{n}\) be the group homotopy from a homeomorphism \(\Pi:\mathbb{R}^{n}\rightarrow\mathbb{R}^{n}\) to the map \(F\). On the one hand, suppose that
		\[H(\Pi\circ f(A),t)\cap H(\Pi\circ g(B),t)\neq\emptyset\quad\forall t\in[0,1]\]
		Since \(f(A)\) and \(g(B)\) are a link pair, and homeomorphism preserves link of a link pair, \(\Pi\circ f(A)\) and \(\Pi\circ g(B)\) are still a link pair, this implies that \(F\circ f\) and \(F\circ g\) are a link pair in view of the arguments in proof of Proposition~\ref{prop:3.1}, therefore can not be separated by disjoint balls.
		
		On the other hand, if
		\[H(\Pi\circ f(A),t)\cap H(\Pi\circ g(B),t)=\emptyset\quad\text{for some}\quad t\in[0,1]\]
		Then the arguments in proof of Proposition~\ref{prop:3.1} can apply to show that
		\[H(\Pi\circ f(A),1)\cap H(\Pi\circ g(B),1)=F\circ f(A)\cap F\circ g(B)\neq\emptyset.\]
		The proof is thus completed.
	\end{proof}
	
	\begin{lemma}\label{lem:4.1}
		Let \(\mathbb{R}^{m}\subset\mathbb{R}^{n}\) be a subspace with \(m<n\), and \(P:\mathbb{R}^{n}\rightarrow\mathbb{R}^{m}\) be the projection map. Suppose that the map
		\[\xi_{fg}(x,y)=\frac{f(x)-g(y)}{\|f(x)-g(y)\|},\quad(x,y)\in A\times B\]
		has a nonzero degree, then \(P(f(A))\cap P(g(B))\neq\emptyset\).
	\end{lemma}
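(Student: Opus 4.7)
The plan is to argue by contradiction: assume \(P(f(A))\cap P(g(B))=\emptyset\) and deduce that \(\deg\xi_{fg}=0\). The contradiction will be produced by deforming \(\xi_{fg}\) through maps into \(S^{n-1}\) onto the sub-sphere \(S^{m-1}=S^{n-1}\cap\mathbb{R}^{m}\); since this target submanifold has dimension strictly less than the source dimension \(n-1\), the resulting map must have degree zero by elementary degree theory, contradicting the nonzero-degree hypothesis.

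First I would translate the contradiction hypothesis into a useful algebraic statement. The condition \(P(f(x))\neq P(g(y))\) for every \((x,y)\in A\times B\) is equivalent to \(P\bigl(f(x)-g(y)\bigr)\neq 0\). Writing \(u:=f(x)-g(y)=u_{\parallel}+u_{\perp}\) according to the orthogonal decomposition \(\mathbb{R}^{n}=\mathbb{R}^{m}\oplus(\mathbb{R}^{m})^{\perp}\), this says precisely that the parallel component \(u_{\parallel}\) is nowhere zero on \(A\times B\).

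Next I would construct the explicit straight-line homotopy
\[
H_{t}(x,y)=\frac{(1-t)\bigl(f(x)-g(y)\bigr)+t\bigl(P(f(x))-P(g(y))\bigr)}{\bigl\|(1-t)\bigl(f(x)-g(y)\bigr)+t\bigl(P(f(x))-P(g(y))\bigr)\bigr\|},
\]
whose numerator collapses to \(u_{\parallel}+(1-t)u_{\perp}\). The crucial well-definedness check is that the denominator never vanishes, but this falls out of the previous step: the \(\mathbb{R}^{m}\)-component of the numerator is precisely \(u_{\parallel}\), which is nonzero for every \((x,y)\in A\times B\) and every \(t\in[0,1]\). Hence \(H\) is a genuine continuous homotopy with \(H_{0}=\xi_{fg}\) and \(H_{1}(x,y)=\tfrac{u_{\parallel}}{\|u_{\parallel}\|}\in S^{m-1}\subset S^{n-1}\).

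Finally I would invoke homotopy invariance of the topological degree together with the standard observation that any continuous map from the (oriented, \((n-1)\)-dimensional) source \(A\times B\) into \(S^{n-1}\) whose image lies in the proper submanifold \(S^{m-1}\) has degree zero; indeed, any point of \(S^{n-1}\setminus S^{m-1}\) is a regular value with empty preimage, so the signed count defining the degree is zero. This yields \(\deg\xi_{fg}=\deg H_{1}=0\), contradicting the hypothesis. The one place I foresee a subtlety is the nonvanishing of the denominator in \(H_{t}\), but the orthogonal decomposition dispatches it in a single line, so I expect no serious obstacle.
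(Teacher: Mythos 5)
Your proof is correct and is essentially the paper's own argument: the paper uses the coordinate homotopy $f^{t}(x)-g^{t}(y)=(f_{1}-g_{1},\dots,f_{m}-g_{m},(1-t)(f_{m+1}-g_{m+1}),\dots)$, which is exactly your numerator $u_{\parallel}+(1-t)u_{\perp}$, with the same nonvanishing check via the unchanged $\mathbb{R}^{m}$-component and the same conclusion that a map into $S^{m-1}\subset S^{n-1}$ has degree zero. Your write-up is in fact slightly more explicit than the paper's about why the denominator cannot vanish and why the terminal map has degree zero.
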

	
	\begin{proof}
		Without loss of generality, suppose \(\mathbb{R}^{m}=\{x=(x_{1},\cdots,x_{n})\in\mathbb{R}^{n}:\)\(x_{m+i}=0,i=1,\cdots,n-m\}\). For \(f(x)=(f_{1}(x),\cdots,f_{n}(x))\in\mathbb{R}^{n},x\in A\), \(g(y)=(g_{1}(y),\cdots,g_{n}(y))\in\mathbb{R}^{n},y\in B\), let
		\[f^{t}(x)=(f_{1}(x),\cdots,f_{m}(x),(1-t)f_{m+1}(x),\cdots,(1-t)f_{n}(x))\]
		and
		\[g^{t}(x)=(g_{1}(y),\cdots,g_{m}(y),(1-t)g_{m+1}(y),\cdots,(1-t)g_{n}(y))\]
		Suppose on the contrary, we have \(P(f(A))\cap P(g(B))=\emptyset\), then
		\[f^{1}(x)\neq g^{1}(y),\quad\forall(x,y)\in A\times B\]
		This implies that for each \(t\in[0,1]\),
		\[f^{t}(x)\neq g^{t}(y),\quad\forall(x,y)\in A\times B\]
		Now define a homotopy \(H:A\times B\times[0,1]\to\mathbb{R}^{n}\)
		\[H(x,y,t)=\xi^{t}_{fg}(x,y)=\frac{f^{t}(x)-g^{t}(y)}{\|f^{t}(x)-g^{t}(y)\|}\]
		It is easy to see that this map is well defined and satisfies
		\[H(x,y,0)=\xi_{fg}(x,y)=\frac{f(x)-g(y)}{\|f(x)-g(y)\|}\]
		and
		\[H((x,y),1)\in S^{m-1}\subset S^{n-1},\quad\forall(x,y)\in A\times B\]
		Showing that the degree of \(\xi^{1}_{fg}(x,y)\) is zero. Since the degree is homotopy invariant, we are led to a contradiction.
	\end{proof}
	
	\begin{theorem}\label{thm:4.2}
		Let \(\mathbb{R}^{m}\subset\mathbb{R}^{n}\) be a subspace with \(m<n\), and \(L:\mathbb{R}^{n}\to\mathbb{R}^{m}\) be a linear map. Suppose that the map
		\[\xi_{fg}(x,y)=\frac{f(x)-g(y)}{\|f(x)-g(y)\|},\quad(x,y)\in A\times B\]
		has a nonzero degree, then \(L(f(A))\cap L(g(B))\neq\emptyset\).
	\end{theorem}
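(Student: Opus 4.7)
The plan is to reduce Theorem~\ref{thm:4.2} to Lemma~\ref{lem:4.1} by replacing the linear map $L$ with an orthogonal projection having the same kernel. I would set $K = \ker L$ and $V = K^{\perp}$, so that $\dim V = \operatorname{rank}(L) =: r \le m < n$. Let $P : \mathbb{R}^n \to V$ denote the orthogonal projection onto $V$. The crucial observation I would lean on is that $L$ and $P$ induce the same equivalence relation on $\mathbb{R}^n$: for any $u, v \in \mathbb{R}^n$, one has $L(u) = L(v) \iff u - v \in K \iff P(u) = P(v)$. Consequently
\[
L(f(A)) \cap L(g(B)) \neq \emptyset \quad\Longleftrightarrow\quad P(f(A)) \cap P(g(B)) \neq \emptyset,
\]
and the task is reduced to proving the right-hand statement.

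Next I would apply Lemma~\ref{lem:4.1} to the subspace $V \subset \mathbb{R}^n$ and its orthogonal projection $P$. Lemma~\ref{lem:4.1} is stated for the standard coordinate subspace, but its proof begins with a ``without loss of generality'' step that amounts to an orthogonal change of coordinates $R \in O(n)$ carrying $V$ onto the standard $\mathbb{R}^r \subset \mathbb{R}^n$. Replacing the embeddings $f, g$ by $R \circ f, R \circ g$ replaces the Gauss-type map $\xi_{fg}$ by $R \circ \xi_{fg}$, and since the restriction of $R$ to $S^{n-1}$ is an element of $O(n)$ acting on the sphere with degree $\pm 1$, the degree of the rotated map equals $\pm\deg(\xi_{fg})$ and is still nonzero. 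Hence Lemma~\ref{lem:4.1} applies and yields $P(f(A)) \cap P(g(B)) \neq \emptyset$, which by the equivalence above concludes the argument.

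The case $\operatorname{rank}(L) < m$ is automatically absorbed into this plan, since the reduction only uses $r < n$, not $r = m$. The main (essentially the only) technical point I would need to verify carefully is the stability of the nonzero-degree hypothesis under the orthogonal change of coordinates; everything else is bookkeeping once one recognizes that a general linear map and the orthogonal projection onto its coimage produce identical fibers and hence identical intersection patterns of the images $f(A)$ and $g(B)$.
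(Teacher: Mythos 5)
Your proposal is correct and follows essentially the same route as the paper: both arguments reduce $L$ to the orthogonal projection onto $(\ker L)^{\perp}$ (the paper via the decomposition $\mathbb{R}^{n}=K\oplus K_{L}$ and the identity $Lf(x)=Lf_{K}(x)$, you via the observation that $L$ and $P$ have identical fibers) and then invoke Lemma~\ref{lem:4.1}. Your explicit check that the nonzero-degree hypothesis survives the orthogonal change of coordinates implicit in the lemma's ``without loss of generality'' step is a detail the paper leaves unstated, but it is not a different method.
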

	
	\begin{proof}
		Let \(K_{L}\) be the kernel of \(L\) with \(\dim K_{L}\leq n-m\), and \(K\) be the complement space orthogonal to \(K_{L}\), i.e., \({\mathbb{R}}^{n}=K\oplus K_{L}\). Let
		\[f(x)=(f_{K}(x),f_{K_{L}}(x))\in{\mathbb{R}}^{n},\mbox{with}\quad f_{K}(x)\in K\quad\mbox{and}\quad f_{K_{L}}(x)\in K_{L}\]
		And
		\[g(y)=(g_{K}(y),g_{K_{L}}(y))\in{\mathbb{R}}^{n},\mbox{with}\quad g_{K}(y)\in K\quad\mbox{and}\quad g_{K_{L}}(x)\in K_{L}\]
		It is easy to see that
		\[Lf(x)=Lf_{K}(x)\]
		and
		\[Lg(y)=Lg_{K}(y)\]
		Since \(f_{K}(A)\) and \(g_{K}(B)\) are images under the projection \(P:{\mathbb{R}}^{n}\to K\), it follows from Lemma~\ref{lem:4.1} that \(f_{K}(A)\cap g_{K}(B)\neq\emptyset\), therefore
		\[L(f_{K}(A))\cap L(g_{K}(B))\neq\emptyset\]
		thus
		\[L(f(A))\cap L(g(B))=L(f_{K}(A))\cap L(g_{K}(B))\neq\emptyset.\]
	\end{proof}
	
	To provide more
	
	\begin{definition}\label{def:4.2}
		Let \(F,G:{\mathbb{R}}^{n}\rightarrow{\mathbb{R}}^{m}\) be two continuous maps. \(F\) and \(G\) are said to be conjugate, if there exist homeomorphisms \(\phi:{\mathbb{R}^{n}}\rightarrow{\mathbb{R}^{n}}\) and \(\varphi:{\mathbb{R}^{m}}\rightarrow{\mathbb{R}^{m}}\) such that
		\[F=\varphi\circ G\circ\phi.\]
	\end{definition}
	
	As a consequence of Theorem~\ref{thm:4.2}, we have the following observation.
	
	\begin{theorem}\label{thm:4.3}
		If a continuous map \(F:{\mathbb{R}}^{n}\rightarrow{\mathbb{R}}^{m}\) (\(m<n\)) is conjugate with a linear map \(L:{\mathbb{R}^{n}}\rightarrow{\mathbb{R}^{m}}\), then \(F(f(A))\cap F(g(B))\neq\emptyset\) provided that the link number of \(f(A)\) and \(g(B)\) is not zero.
	\end{theorem}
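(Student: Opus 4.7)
The plan is to reduce the claim to Theorem~\ref{thm:4.2} by peeling off the two homeomorphisms in the conjugation. Write $F = \varphi \circ L \circ \phi$ with $\phi:\mathbb{R}^n\to\mathbb{R}^n$ and $\varphi:\mathbb{R}^m\to\mathbb{R}^m$ homeomorphisms and $L$ linear, and set $\tilde f := \phi\circ f$, $\tilde g := \phi\circ g$. Since $\varphi$ is a bijection,
\[F(f(A))\cap F(g(B)) \;=\; \varphi\bigl(L(\tilde f(A))\cap L(\tilde g(B))\bigr),\]
so it suffices to show $L(\tilde f(A))\cap L(\tilde g(B))\neq\emptyset$, which is precisely the kind of conclusion Theorem~\ref{thm:4.2} provides, provided its hypothesis can be verified for the pair $(\tilde f,\tilde g)$.

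The substantive step is therefore to check that $\xi_{\tilde f\tilde g}(x,y) = (\tilde f(x)-\tilde g(y))/\|\tilde f(x)-\tilde g(y)\|$ still has nonzero degree. This amounts to invariance, up to sign, of the linking degree under the ambient homeomorphism $\phi$. My plan is to produce an isotopy $\phi_t$ of homeomorphisms of $\mathbb{R}^n$ interpolating between $\mathrm{id}$ and $\phi$ (absorbing a fixed reflection into $\phi$ first if $\phi$ is orientation-reversing) and then define the homotopy
\[H(x,y,t) \;=\; \frac{\phi_t(f(x)) - \phi_t(g(y))}{\|\phi_t(f(x)) - \phi_t(g(y))\|},\]
which is well defined because each $\phi_t$ is injective and $f(A)\cap g(B)=\emptyset$. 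Since $H$ deforms $\xi_{fg}$ into $\xi_{\tilde f\tilde g}$, possibly post-composed with a degree $\pm 1$ reflection of $S^{n-1}$, homotopy invariance of degree forces $|\deg \xi_{\tilde f\tilde g}| = |\deg \xi_{fg}| \neq 0$.

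The main obstacle is this isotopy step: asserting that every homeomorphism of $\mathbb{R}^n$ is, modulo orientation, isotopic to the identity is classical but not at all elementary, and in the spirit of this paper a more hands-on route would be preferable. An alternative I would pursue is to bypass the isotopy entirely and argue invariance directly: the degree of $\xi_{fg}$ coincides, up to sign, with the classical linking number of $f(A)$ and $g(B)$, which can be read off from Alexander duality applied to the disjoint compact pair in $\mathbb{R}^n$, and $\phi$ induces a homeomorphism between $\mathbb{R}^n \setminus (f(A)\cup g(B))$ and $\mathbb{R}^n\setminus(\tilde f(A)\cup \tilde g(B))$, hence preserves the relevant cohomology class up to a sign. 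Once the nonzero degree of $\xi_{\tilde f\tilde g}$ is secured by either route, Theorem~\ref{thm:4.2} yields $L(\tilde f(A))\cap L(\tilde g(B))\neq\emptyset$, and applying the bijection $\varphi$ completes the argument.
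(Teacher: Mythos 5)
Your proposal follows the same skeleton as the paper's proof: write \(F=\varphi\circ L\circ\phi\), push \(f\) and \(g\) through \(\phi\), apply Theorem~\ref{thm:4.2} to \(L\) and the transported pair, and finish by injectivity of \(\varphi\) (the identity \(\varphi(X)\cap\varphi(Y)=\varphi(X\cap Y)\)). The difference is in the middle step, and it is a substantive one. The paper disposes of it with the single sentence that ``homeomorphism preserves link pair'' (Proposition~\ref{prop:3.2}/Remark~\ref{rem:3.1}) and then invokes Theorem~\ref{thm:4.2}; but the hypothesis of Theorem~\ref{thm:4.2} is not that the images form a link pair --- it is that \(\xi_{fg}\) has nonzero degree, and being a link pair in the homotopy sense of Definition~\ref{def:2.2} is not literally the same condition. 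You correctly identify that what must actually be checked is that \(\xi_{\phi f,\,\phi g}\) still has nonzero degree, i.e.\ invariance (up to sign) of the linking degree under an ambient homeomorphism, and you offer two routes to it. Be aware of the cost of each: the isotopy route rests on the statement that every orientation-preserving homeomorphism of \(\mathbb{R}^{n}\) is isotopic to the identity, which is the stable homeomorphism theorem --- true, but far from elementary (Kirby for \(n\geq 5\), Quinn for \(n=4\)); the Alexander-duality route is cleaner and works because a homeomorphism of \(\mathbb{R}^{n}\) is proper and so extends to \(S^{n}\), but it requires pinning down how the ``degree'' of \(\xi_{fg}:A\times B\to S^{n-1}\) is defined for general compacta \(A,B\) and identifying it with the cohomological linking pairing, a point the paper itself never makes precise. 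In short, your argument is the paper's argument with the one genuine gap made explicit and a credible plan to close it; either of your two routes, fully executed, would yield a more complete proof than the one printed.
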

	
	\begin{proof}
		The condition in the above theorem means that there exist two homeomorphisms as in Definition~\ref{def:4.2} such that
		\[F = \varphi \circ L \circ \phi.\]
		Since homeomorphism preserves link pair, \(\phi\circ f(A)\) and \(\phi\circ g(B)\) are still a link pair. Then in view of Theorem~\ref{thm:4.2}, we have
		\[L\phi\circ f(A)\cap L\phi\circ g(B)\neq\emptyset\]
		So
		\[F(f(A))\cap F(g(B))=\varphi\circ L\phi\circ f(A)\cap\varphi\circ L\phi\circ g(B)\neq\emptyset\]
		and we complete the proof.
	\end{proof}
	
	\section{Applications to problems in deep learning}\label{Applications to problems in deep learning}
	
	We first show in this section how the topological property of the input datasets affects the architecture of a deep neural network in classification task. Then, as a by-product, we will geometrically show why the minimal width of a deep neural network (DNN) should be greater than the dimension of input (data) space in the setting of approximation.
	
	In deep learning, one of the powerful tools in classification applications are the famous deep neural network. The two typical nonlinear activation functions are the rectified linear unit defined as
	\[Relu(x)=\max\{0,x\},\quad x\in\mathbb{R}\]
	and the sigmoid function defined as
	\[\sigma(x)=\frac{1}{1+e^{-x}},\quad x\in\mathbb{R}\]
	In the vector form, we have the maps \(Relu:\mathbb{R}^{n}\rightarrow\mathbb{R}^{n}\) and \(\sigma:\mathbb{R}^{n}\rightarrow\mathbb{R}^{n}\) with
	\[Relu(x)=(Relu(x_{1}),\cdots,Relu(x_{n})),\quad x=(x_{1},\ldots,x_{n})\in\mathbb{R}^{n}\]
	and
	\[\sigma(x)=(\sigma(x_{1}),\cdots,\sigma(x_{n})),\quad x=(x_{1},\ldots,x_{n})\in\mathbb{R}^{n}\]
	In DNNs, a function defined in a hidden layer can be written as
	\[F:\mathbb{R}^{n}\rightarrow\mathbb{R}^{m}\]
	\[F(x) = Relu\circ A(x)\]
	with ReLU activation, and
	\[F(x)=\sigma\circ A(x)\]
	with sigmoid activation, where
	\[A(X)=Wx+b,\quad W\in M_{n\times m},\quad b\in\mathbb{R}^{m}.\]
	
	First we prove the following fact about the map \(\sigma\).
	
	\begin{proposition}\label{prop:5.1}
		The map \(Relu\) is group homotopic to the identity map.
	\end{proposition}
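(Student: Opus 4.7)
The plan is to build an explicit group homotopy from the identity to $Relu$ as a (Filippov-type) flow of a discontinuous autonomous vector field, in the spirit of the example given after Definition~\ref{def:3.1}. Because the coordinate activation $Relu:\mathbb{R}^{n}\to\mathbb{R}^{n}$ acts independently on each coordinate, it is enough to construct a scalar group homotopy $h:\mathbb{R}\times[0,1]\to\mathbb{R}$ from $\mathrm{id}_{\mathbb{R}}$ to the scalar $Relu$ and then set $H(x,t)=(h(x_{1},t),\ldots,h(x_{n},t))$; both endpoint conditions and the semigroup identity transfer coordinate-wise without change.

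For the scalar $h$, I would take the flow of the vector field $v(x)=e^{-x}$ on $(-\infty,0)$ and $v(x)=0$ on $[0,\infty)$. Solving $\dot x=e^{-x}$ gives $x(t)=\ln(t+e^{x_{0}})$, which hits $0$ at the finite time $1-e^{x_{0}}\in(0,1)$, and the trajectory is then held at $0$. This yields the explicit formula
\[
h(x,t)=\begin{cases}x,& x\geq 0,\\ \ln(t+e^{x}),& x<0\text{ and }t+e^{x}\leq 1,\\ 0,& x<0\text{ and }t+e^{x}\geq 1.\end{cases}
\]
The verification then proceeds in four short steps: (i) $h(x,0)=x$ everywhere (using $\ln e^{x}=x$) and $h(x,1)=Relu(x)$ (using $1-e^{x}>0$ for $x<0$); (ii) joint continuity of $h$ in $(x,t)$, where the only nontrivial loci are the curve $t+e^{x}=1$ (both lower branches evaluate to $0$ there) and the axis $x=0$, checked directly; (iii) the semigroup identity $h(h(x,t_{1}),t_{2})=h(x,t_{1}+t_{2})$ via a case analysis on whether the orbit of $x$ has already reached $0$ by time $t_{1}$, the generic case reducing to the elementary calculation $\ln(t_{2}+e^{\ln(t_{1}+e^{x})})=\ln(t_{1}+t_{2}+e^{x})$; (iv) the coordinate-wise extension to $\mathbb{R}^{n}$.

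The main conceptual obstacle is the realisation that no smooth autonomous vector field can do the job. The homotopy must fix $[0,\infty)$ pointwise, forcing $v\equiv 0$ on $[0,\infty)$; if in addition $v$ is continuous at $0$ then $v(0)=0$, and an orbit starting at any $x_{0}<0$ then approaches $0$ only asymptotically, so the time-$1$ map cannot coincide with $Relu$. The discontinuous choice $v(x)=e^{-x}$ on $(-\infty,0)$ circumvents this because $\int_{-\infty}^{0}e^{x}\,dx=1$ is finite, so the entire negative half-line is swept into $0$ by unit time; once this is accepted, the rest is bookkeeping.
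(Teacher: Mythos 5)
Your proof is correct, and it takes a genuinely different route from the paper's. The paper also realises the homotopy as a flow, but of the Lipschitz field \(\dot x = Relu(x)-x\), whose time-\(t\) map \(\phi(x,t)\) fixes \(\mathbb{R}^{n}_{+}\) and only converges to \(Relu\) as \(t\to\infty\); the unit interval is then reached by the compactification \(H(x,t)=\phi(x,t/(1-t))\). You instead choose a (discontinuous) field for which every orbit starting in \((-\infty,0)\) arrives at \(0\) by the finite time \(1-e^{x_{0}}<1\), so \(Relu\) is attained exactly at \(t=1\) and the semigroup identity \(h(h(x,t_{1}),t_{2})=h(x,t_{1}+t_{2})\) holds \emph{literally in the homotopy parameter}, via \(\ln(t_{2}+e^{\ln(t_{1}+e^{x})})=\ln(t_{1}+t_{2}+e^{x})\) together with the absorbing case \(t+e^{x}\geq 1\). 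This buys you something real: the paper's reparametrisation is not additive, since \(H(H(x,t_{1}),t_{2})=\phi\bigl(x,\tfrac{t_{1}}{1-t_{1}}+\tfrac{t_{2}}{1-t_{2}}\bigr)\) rather than \(\phi\bigl(x,\tfrac{t_{1}+t_{2}}{1-t_{1}-t_{2}}\bigr)\), so the identity required by Definition~\ref{def:3.1} is only verified there in the uncompactified time variable, whereas your explicit formula satisfies it on the nose. What the paper's route buys in return is a smooth, globally defined generating field and no case analysis. Two minor points: your parenthetical claim that mere continuity of \(v\) at \(0\) forces only asymptotic approach is an overstatement (non-Lipschitz continuous fields such as \(v(x)=\sqrt{-x}\), suitably modified at infinity, reach the equilibrium in finite and even uniformly bounded time); the claim is correct for smooth or Lipschitz fields, which is all your motivation needs. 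And since your \(h\) is given in closed form and verified directly, the Filippov language can be dropped entirely, with the ODE relegated to motivation.
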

	
	\begin{proof}
		Consider the following systems of ordinary differential equations
		\[\dot{x}=Relu(x)-x,\quad x=(x_{1},\ldots,x_{n})\in\mathbb{R}^{n}\]
		Note that the function \(Relu(x)\) is continuous with Lipschitz constant one, therefore the solution \(\phi(x,t)\) of the above system is well defined for each \(t\in[0,\infty)\). Accordingly, we have a homeomorphism \(\phi(x,t):\mathbb{R}^{n}\to\mathbb{R}^{n}\) for each fixed \(t\). It is easy to see that the solution \(\phi(x,t)\) has the following properties:
		\begin{enumerate}
			\item \(\phi(x,t)=x\), for all \(x\in\mathbb{R}_{+}^{n}=\{x:x_{i}\geq 0,i=1,\cdots,n\}\)
			\item \(\lim\limits_{t\to\infty}\phi(x,t)=0\), for all \(x\in-\mathbb{R}_{+}^{n}=\{x:x_{i}\leq 0,i=1,\cdots,n\}\)
			\item Let \(\phi(x,t)(\phi_{1}(x,t),\cdots,\phi_{n}(x,t))\), then \(\lim\limits_{t\to\infty}\phi_{i}(x,t)=x_{i}\) if \(x_{i}\geq 0\) and \(\lim\limits_{t\to\infty}\phi_{i}(x,t)=0\) if \(x_{i}\leq 0\).
		\end{enumerate}
		Now construct the homotopy
		\[H:\mathbb{R}^{n}\times[0,1]\to\mathbb{R}^{n}\]
		\[H(x,t)=\phi(x,\frac{t}{1-t})\]
		It is easy to see that this homotopy is continuous group homotopy and satisfies
		\[H(x,0)=\phi(x,0)=x,\quad H(x,1)=Relu(x)\]
		The proof is completed.
	\end{proof}
	
	In designing a DNN for classification problem, one needs to construct a series of maps, called layer map, so that the resultant map obtained with the composition of these maps can have some learning ability as one requires. We only recall the notions and notations necessary in the present paper, for details we refer the reader to \cite{fefferman2016testing, korman2018autoencoding}.
	
	A deep neural network is a continuous map \(Net:\mathbb{R}^{n}\rightarrow\mathbb{R}^{m}\), \(n>m\), that is a composition of a finite sequence of functions and is of the form
	\[Net^{R} =F_{k}\circ F_{k-1}\circ\cdots\circ F_{1}(x)
	=Relu\circ A_{k}\circ Relu\circ A_{k-1}\circ\cdots\circ Relu\circ A_{1}(x)\]
	\[F_{i}:\mathbb{R}^{n_{i}}\rightarrow\mathbb{R}^{n_{i+1}},i=1,\cdots,k,\quad\text{with}\quad n_{1}=n\quad\text{and}\quad n_{k+1}=m\]
	\[F_{i}(x)=Relu\circ A_{i}(x),\quad A_{i}(x)=W_{i}x+b_{i},\quad W_{i}\in M_{n_{i}\times m_{i}},\quad b_{i}\in\mathbb{R}^{m_{i}}\]
	with ReLU activation.
	
	Or
	\[Net^{\sigma} =F_{k}\circ F_{k-1}\circ\cdots\circ F(x)
	=\sigma\circ A_{k}\circ\sigma\circ A_{k-1}\circ\cdots\circ\sigma\circ A_{1}(x)\]
	\[F_{i}:\mathbb{R}^{n_{i}}\rightarrow\mathbb{R}^{n_{i+1}},i=1,\cdots,k\quad\text{with}\quad n_{1}=n\quad\text{and}\quad n_{k+1}=m\]
	With sigmoid activation.
	
	In terms of deep learning theory, for a DNN of the above form, \(k\) is called its depth and \(\max\{n_{i}:i=1\cdots,k\}\) is called its width.
	
	\begin{definition}\label{def:5.1}
		Let \(B_{1},\cdots,B_{k}\) be compact subsets of \(\mathbb{R}^{n}\). A continuous map \(F:\mathbb{R}^{n}\rightarrow\mathbb{R}^{m}\) is said to classify \(B_{1},\cdots,B_{k}\), if \(F(B_{i})\cap F(B_{j})\neq\emptyset\), \(i\neq j\), \(i,j=1,\cdots,k\).
	\end{definition}
	
	\begin{proposition}\label{prop:5.2}
		If two compacts \(A\) and \(B\) of \(\mathbb{R}^{n}\) are a link pair with nonzero degree, then any DNN \(Net^{R}:\mathbb{R}^{n}\rightarrow\mathbb{R}^{m}\), \(n>m\), with width not greater than \(n\) fails to classify \(A\) and \(B\).
	\end{proposition}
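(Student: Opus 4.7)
I would decompose $Net^{R} = F_{k}\circ\cdots\circ F_{1}$ with $F_{i} = Relu\circ A_{i}$ and $A_{i}(x) = W_{i}x + b_{i}$, write $\iota_{A}$, $\iota_{B}$ for the inclusions, and set $f_{0} = \iota_{A}$, $g_{0} = \iota_{B}$, $f_{i} = F_{i}\circ f_{i-1}$, $g_{i} = F_{i}\circ g_{i-1}$. The single observation driving the whole argument is: if $f_{i}(A)\cap g_{i}(B)\neq\emptyset$ at any stage, say $f_{i}(x_{0}) = g_{i}(y_{0})$, then applying the remaining layers to both sides gives $f_{k}(x_{0}) = g_{k}(y_{0})$ and hence $Net^{R}(A)\cap Net^{R}(B)\neq\emptyset$, which is exactly what Definition~\ref{def:5.1} calls failing to classify $A$ and $B$. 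So it suffices to force intersection at \emph{some} intermediate layer.

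To do this I would carry an inductive invariant through the layers, namely ``$f_{i}(A)$ and $g_{i}(B)$ form a link pair in $\mathbb{R}^{n_{i+1}}$ and $\xi_{f_{i}g_{i}}$ has nonzero degree'', and then trigger intersection at the first layer where the ambient rank drops below $n$. Since $n_{1} = n$ and $n_{k+1} = m < n$, there is a first index $i_{0}$ for which the effective rank of $W_{i_{0}}$ is strictly less than $n$ (either $n_{i_{0}+1} < n$, or $n_{i_{0}+1} = n$ but $W_{i_{0}}$ is singular); in the worst case $i_{0} = k$. For each $j < i_{0}$, $A_{j}$ is a homeomorphism of $\mathbb{R}^{n}$, so the link pair survives by Proposition~\ref{prop:3.2}; since $GL_{n}(\mathbb{R})$ has only two components, $A_{j}$ is linearly isotopic to the identity or to a single reflection, and homotopy invariance of degree shows that the degree of $\xi$ changes at worst by a sign. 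The subsequent $Relu$ piece is handled by Proposition~\ref{prop:5.1} combined with the dichotomy from the proofs of Proposition~\ref{prop:3.1} and Theorem~\ref{thm:4.1}: along the group homotopy $H$ from the identity to $Relu$, either $H(f_{j-1},t)$ and $H(g_{j-1},t)$ meet at some $t$, in which case the group property forces their images at $t=1$ to meet and we are done, or they stay disjoint throughout, so the link pair persists and, by homotopy invariance, so does the nonzero degree of $\xi$.

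At the drop layer $i_{0}$ I would apply Theorem~\ref{thm:4.2} to the linear part $W_{i_{0}}$ (translation by $b_{i_{0}}$ is a homeomorphism and is immaterial to intersection). If $n_{i_{0}+1} < n$, Theorem~\ref{thm:4.2} applies directly; if $n_{i_{0}+1} = n$ but $W_{i_{0}}$ is singular, I would factor $W_{i_{0}}$ through its image, a proper linear subspace of dimension $d < n$, and invoke Theorem~\ref{thm:4.2} for this factor. The inductively maintained nonzero degree of $\xi_{f_{i_{0}-1}g_{i_{0}-1}}$ then forces $A_{i_{0}}(f_{i_{0}-1}(A))\cap A_{i_{0}}(g_{i_{0}-1}(B))\neq\emptyset$; applying $Relu$ on top cannot separate points that already coincide, so $f_{i_{0}}(A)\cap g_{i_{0}}(B)\neq\emptyset$, and the opening observation closes the argument.

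The main obstacle I expect is the clean preservation of the nonzero-degree invariant through a $Relu$ layer in the disjoint alternative. One needs the straight-line homotopy $(x,y)\mapsto (H(f_{j-1}(x),t) - H(g_{j-1}(y),t))/\|H(f_{j-1}(x),t) - H(g_{j-1}(y),t)\|$ to be a well-defined continuous family of maps $A\times B \to S^{n-1}$, which is exactly what the disjointness of $H(f_{j-1},t)$ and $H(g_{j-1},t)$ throughout $t\in[0,1]$ gives. The remaining care-points (orientation-reversing $W_{j}$, rank-deficient $W_{j}$ in the regime $n_{j+1} = n$, and absorbing the affine shifts $b_{j}$) are local and require no new ideas.
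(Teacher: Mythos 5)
Your proof follows essentially the same route as the paper's: propagate the ``link pair with nonzero degree'' invariant through the full-rank layers via Proposition~\ref{prop:3.2}, Proposition~\ref{prop:5.1} and the dichotomy in the proofs of Proposition~\ref{prop:3.1} and Theorem~\ref{thm:4.1}, trigger an intersection via Theorem~\ref{thm:4.2} at the first layer whose linear part has rank below \(n\), and observe that an intersection, once created, survives all subsequent layers. Your write-up is in fact more complete than the paper's two-case sketch: you explicitly treat the singular \(W_{i_{0}}\) with \(n_{i_{0}+1}=n\), absorb the affine shifts, and verify that the nonzero degree of \(\xi\) persists through the ReLU group homotopy in the disjoint alternative --- points the paper leaves implicit.
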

	
	\begin{proof}
		If \(n_{2}<n\), then in view of Theorem~\ref{thm:4.2}, we have \(W_{1}(A)\cap W_{1}(B)\neq\emptyset\) implying that \(F_{1}(A)\cap F_{1}(B)\neq\emptyset\), thus \(Net^{R}(A)\cap Net^{R}(B)\neq\emptyset\).
		
		If \(n_{2}=n\), and \(A_{1}\) is a homeomorphism (i.e., \(W_{1}\) is not singular), then \(F_{1}(A)\) and \(F_{1}(B)\) is still a link pair with nonzero degree or they have nonempty intersection in view of Theorem~\ref{thm:4.1} and Proposition~\ref{prop:5.1}. Since there must be an index \(1<i\leq k-1\) such that \(n_{i}<m\), it can be seen that \(Net^{R}(A)\cap Net^{R}(B)\neq\emptyset\).
	\end{proof}
	
	In the same manner we can prove the following statement in the setting of sigmoid functions.
	
	\begin{proposition}\label{prop:5.3}
		If two compacts \(A\) and \(B\) of \({\mathbb{R}}^{n}\) are a link pair with nonzero degree, then any DNN \(Net^{\sigma}:{\mathbb{R}}^{n}\rightarrow{\mathbb{R}}^{m}\) with width not greater than \(n\) fails to classify \(A\) and \(B\).
	\end{proposition}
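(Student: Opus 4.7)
The plan is to mirror the proof of Proposition~\ref{prop:5.2}, with the sigmoid $\sigma$ playing the structural role of Relu. The first step is to establish the sigmoid analog of Proposition~\ref{prop:5.1}: the coordinate-wise map $\sigma:\mathbb{R}^n\to\mathbb{R}^n$ is group homotopic to a homeomorphism $F:\mathbb{R}^n\to F(\mathbb{R}^n)\subset\mathbb{R}^n$. The most direct route is to observe that $\sigma$ is itself a homeomorphism from $\mathbb{R}^n$ onto the open cube $(0,1)^n$, so by Remark~\ref{rem:3.1} it already plays the role of the target homeomorphism demanded by Theorem~\ref{thm:4.1}. If one prefers an explicit group homotopy, one constructs an ODE $\dot{x}=v(x)$ coordinate-wise with $v$ chosen so that the time-reparametrized flow $H(x,t)=\Phi(x,t/(1-t))$ starts at the identity and ends at $\sigma$; care is needed here because the naive analog $v(x)=\sigma(x)-x$ has an interior fixed point and does not recover $\sigma$ in the limit.

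With this preliminary in hand, the layer-by-layer induction mirrors Proposition~\ref{prop:5.2}. Consider the first layer $F_1=\sigma\circ A_1$ with $A_1(x)=W_1x+b_1$. If $n_2<n$, or more generally if $W_1$ has rank strictly less than $n$, the linear part factors through a proper linear subspace; Theorem~\ref{thm:4.2} applied to the nonzero-degree link pair forces $W_1(A)\cap W_1(B)\neq\emptyset$, and since $\sigma$ is injective this gives $F_1(A)\cap F_1(B)\neq\emptyset$. If instead $n_2=n$ and $W_1$ is nonsingular, then $A_1$ is a homeomorphism of $\mathbb{R}^n$, and by Theorem~\ref{thm:4.1} combined with the first step, $F_1(A)$ and $F_1(B)$ either already intersect or remain a link pair with nonzero degree. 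In the latter case I pass to the next layer and repeat.

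Since the output dimension $m<n$ while every width satisfies $n_i\le n$, there must exist some index $i$ at which the corresponding linear part either strictly reduces dimension or has nontrivial kernel; at that layer the first case above produces a nonempty intersection of the layer images. Continuous post-composition preserves any nonempty intersection, so $Net^{\sigma}(A)\cap Net^{\sigma}(B)\neq\emptyset$, meaning the DNN fails to classify $A$ and $B$.

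The main obstacle I anticipate is the sigmoid analog of Proposition~\ref{prop:5.1} in a form that fits cleanly into the hypothesis of Theorem~\ref{thm:4.1}. Unlike Relu, whose image fills $\mathbb{R}^n_+$ and whose dynamics admit the natural flow $\dot{x}=Relu(x)-x$ with the correct asymptotic behavior, the sigmoid is bounded with image $(0,1)^n$, and the cleanest argument appears to be the one through Remark~\ref{rem:3.1}, treating $\sigma$ as a homeomorphism onto its image rather than constructing an artificial global flow whose time-one map equals $\sigma$.
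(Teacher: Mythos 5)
Your proposal is correct and follows essentially the same route as the paper, which gives no separate argument for this proposition and simply defers to the proof of Proposition~\ref{prop:5.2} ("in the same manner"). Your observation that the sigmoid case is cleanest via Remark~\ref{rem:3.1} (treating \(\sigma\) as a homeomorphism onto \((0,1)^n\)) rather than via an ODE flow is exactly the detail the paper leaves implicit, and your caution about the fixed point of \(\dot{x}=\sigma(x)-x\) is well placed.
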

	
	In fact, keep in mind the remark after Proposition~\ref{prop:3.2} and Theorem~\ref{thm:4.1}, we can prove a more general assertion in the same manner as that of proof of Proposition~\ref{prop:5.2}.
	
	\begin{theorem}\label{thm:5.1}
		If an activation function \(\mu:{\mathbb{R}}^{k}\rightarrow{\mathbb{R}}^{k}\) is group homotopic to a homeomorphism \(F:{\mathbb{R}}^{k}\to F({\mathbb{R}}^{k})\subset{\mathbb{R}}^{k}\), then any DNN \(Net^{\mu}:{\mathbb{R}}^{n}\rightarrow{\mathbb{R}}^{m}\), \(n>m\) with width not greater than \(n\) fails to classify two compacts \(A\) and \(B\) of \({\mathbb{R}}^{n}\) provided \(A\) and \(B\) are a link pair with nonzero degree.
	\end{theorem}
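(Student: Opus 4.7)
The plan is to replicate the layer-by-layer induction used in the proof of Proposition~\ref{prop:5.2}, with the role of Proposition~\ref{prop:5.1} (Relu is group homotopic to the identity) taken over by the hypothesis that $\mu$ is group homotopic to a homeomorphism onto its image, and Remark~\ref{rem:3.1} invoked in place of the plain statement of Proposition~\ref{prop:3.2}. Write the network as $Net^{\mu}=F_{k}\circ\cdots\circ F_{1}$ with $F_{i}=\mu\circ A_{i}$, $A_{i}(x)=W_{i}x+b_{i}$, $W_{i}\in M_{n_{i}\times n_{i+1}}$, $n_{1}=n$, $n_{k+1}=m<n$, and $n_{i}\leq n$ for all $i$. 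Since $m<n$, at least one of the affine maps $A_{i}$ must either decrease dimension ($n_{i+1}<n_{i}$) or have singular $W_{i}$.

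The inductive claim I would prove, processing layers one at a time, is: the pair of images of $A$ and $B$ after the first $i$ layers is either a link pair with nonzero degree, or already has nonempty intersection. The base case is the hypothesis on $A$ and $B$. For the inductive step, I would split on the behavior of $A_{i}$. Case (A): $A_{i}$ is not a linear homeomorphism, so $W_{i}$ drops rank. Then Theorem~\ref{thm:4.2} applied to the inductive link pair forces $A_{i}$ to merge the two images, and this intersection survives all subsequent continuous maps since $G(X\cap Y)\subseteq G(X)\cap G(Y)$ for any continuous $G$. Case (B): $A_{i}$ is a linear homeomorphism. Then by Remark~\ref{rem:3.1} applied to the homeomorphism $A_{i}$, the link pair and its nonzero degree persist through $A_{i}$; applying $\mu$ afterwards, Theorem~\ref{thm:4.1} (together with the refined conclusion of Proposition~\ref{prop:3.1}, which is really what its proof establishes) yields that the resulting images are either a link pair or have nonempty intersection. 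Since a dimension-reducing layer of type (A) necessarily appears before the output, the induction terminates in a nonempty intersection, whence $Net^{\mu}(A)\cap Net^{\mu}(B)\neq\emptyset$ and the network cannot classify $A$ and $B$.

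The main obstacle is the bookkeeping in Case (B), specifically ensuring that the \emph{nonzero degree} of the separating map $\xi_{fg}$ (required as input to Theorem~\ref{thm:4.2} at the next Case (A) step) is preserved across the activation layer. To handle this, I would exhibit the homotopy $t\mapsto\xi_{H(\cdot,t)\circ A_i f,\,H(\cdot,t)\circ A_i g}$, where $H$ is the group homotopy from the homeomorphism to $\mu$; so long as the intermediate images remain disjoint (which is the sub-case we are in), this provides a homotopy from $\xi_{FA_if,FA_ig}$ to $\xi_{\mu A_if,\mu A_ig}$ through well-defined unit-vector maps, and homotopy invariance of degree together with the fact that the homeomorphism $F$ preserves degree class of $\xi_{fg}$ gives the claim. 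Once this point is checked, the remainder of the argument is a direct transcription of the proof of Proposition~\ref{prop:5.2}.
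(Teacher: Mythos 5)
Your proposal is correct and follows essentially the same route as the paper, which proves Theorem~\ref{thm:5.1} only by pointing to the layer-by-layer case analysis in the proof of Proposition~\ref{prop:5.2} together with Remark~\ref{rem:3.1} and Theorem~\ref{thm:4.1}. In fact you go slightly further than the paper: your explicit homotopy \(t\mapsto\xi_{H(\cdot,t)\circ A_i f,\,H(\cdot,t)\circ A_i g}\) supplies the persistence of the \emph{nonzero degree} of \(\xi_{fg}\) across an activation layer, a step the paper needs (to re-invoke Theorem~\ref{thm:4.2} at the dimension-dropping layer) but leaves implicit; the only remaining assertion taken on faith in both your argument and the paper's is that a homeomorphism onto its image preserves the nonzero-degree property, not merely the link-pair property.
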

	
	\begin{remark}\label{rem:5.1}
		The result above shows that in classification problems, the width of a DNN is a more important factor in construction of DNNs if the input data has some kind of complicated topological property such as nontrivial link. In particular, in reducing the dimensionality of data with DNNs, one has to design a DNN with its width larger than the dimension of data space to avoid potential loss of information in the dataset.
	\end{remark}
	
	Understanding the connection between the approximation capacity of a deep neural networks and its architecture is a key to reveal the power of deep learning. In the following we will provide some results on approximation capacity of a deep neural networks with width bounded by the dimension of input space.
	
	\begin{corollary}\label{cor:5.1}
		Let \(D^{n}\subset\mathbb{R}^{n}\) be the standard unit ball, then for every \(\delta>0\), there is a continuous function \(f:D^{n}\rightarrow\mathbb{R}\), such that for each \(Net:D^{n}\rightarrow\mathbb{R}\), with width not greater than \(n\) but with arbitrary depth, we have
		\[\|Net^{R}(x)-f(x)\|\geq\delta,\quad x\in D^{n}.\]
	\end{corollary}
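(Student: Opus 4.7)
The plan is to argue by contradiction, combining Proposition~\ref{prop:5.2} with a radial construction: I would fabricate a continuous target $f:D^{n}\to\mathbb{R}$ whose extreme values $\pm 3\delta$ are attained on two disjoint compact sets $A,B\subset D^{n}$ forming a link pair with nonzero degree, and then show that any network of width at most $n$ approximating $f$ to within $\delta$ in sup-norm would force $Net^{R}(A)$ and $Net^{R}(B)$ into disjoint intervals, violating Proposition~\ref{prop:5.2}.

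First I would produce the link pair. For $n\geq 2$ I would take the scaled sphere $A=\{x\in\mathbb{R}^{n}:\|x\|=1/2\}\cong S^{n-1}$ together with the center $B=\{0\}$; both are compact subsets of $D^{n}$ and are disjoint. The map $\xi_{AB}:A\times B\to S^{n-1}$, $(x,0)\mapsto x/\|x\|$, is (up to the obvious rescaling) the identity on $S^{n-1}$, so its degree equals $1$. Linkedness in the sense of Definition~\ref{def:2.2} then follows from the degree-invariance argument in the spirit of Lemma~\ref{lem:4.1}: for any candidate unlinking homotopy $H$ of $A\cup B$ ending with the two pieces in disjoint balls $D,\bar D$, the assignment $\xi_{t}(a,b)=(H(a,t)-H(b,t))/\|H(a,t)-H(b,t)\|$ would give a homotopy from $\xi_{AB}$ to a map factoring through the contractible $D\times\bar D$, forcing degree $0$ and contradicting the computation above.

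Next I would define $f$ radially by $f(x)=\psi(\|x\|)$ with $\psi:[0,1]\to\mathbb{R}$ continuous, $\psi(0)=3\delta$, $\psi(1/2)=-3\delta$, extended arbitrarily on $[1/2,1]$ (piecewise linear is fine). Then $f\equiv 3\delta$ on $B$ and $f\equiv -3\delta$ on $A$. Assume for contradiction that some $Net^{R}:D^{n}\to\mathbb{R}$ of width at most $n$ satisfies $|Net^{R}(x)-f(x)|<\delta$ for every $x\in D^{n}$. Then $Net^{R}(A)\subset(-4\delta,-2\delta)$ and $Net^{R}(B)\subset(2\delta,4\delta)$, two disjoint open intervals in $\mathbb{R}$, i.e., disjoint embedded $1$-balls. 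So $Net^{R}$ separates $A$ and $B$ in the sense of Definition~\ref{def:4.1}, contradicting Proposition~\ref{prop:5.2}, which guarantees $Net^{R}(A)\cap Net^{R}(B)\neq\emptyset$.

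The main obstacle is exhibiting a concrete link pair with nonzero degree sitting inside $D^{n}$, since Proposition~\ref{prop:5.2} relies on exactly that hypothesis; the sphere-plus-center configuration handles every $n\geq 2$ uniformly. The remaining ingredients---the radial $f$ and squeezing the images into disjoint intervals---are routine, and the scheme extends verbatim to any activation group-homotopic to a homeomorphism via Theorem~\ref{thm:5.1}.
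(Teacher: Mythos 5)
Your proposal is correct and follows essentially the same route as the paper: the paper also uses the sphere-plus-center configuration (there, $S^{n-1}$ and the origin $O$ inside $D^{n}$), notes that $\xi(x,O)=(x-O)/\|x-O\|$ has degree one so the pair is linked, builds $f$ taking well-separated constant values on the two sets (via Urysohn's Lemma rather than your explicit radial formula), and concludes from $Net^{R}(S^{n-1})\cap Net^{R}(O)\neq\emptyset$ that the sup-norm error is at least $\delta$. Your variant with values $\pm 3\delta$ and disjoint target intervals is just a slightly more generous version of the paper's triangle-inequality step.
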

	
	\begin{proof}
		Let \(S^{n-1}\subset\mathbb{R}^{n}\) be the standard unit sphere, and \(O\) be the origin, both of them are closed sets in \(D^{n}\). Then by the well-known Urysohn's Lemma \cite{runde2005taste}, there is a continuous function \(f:D^{n}\to[0,1]\) such that \(f(O)=0\) and \(f(S^{n-1})=2\delta\). Note that the map
		\[\xi(x,O)=\frac{x-O}{\|x-O\|},\quad(x,O)\in S^{n-1}\times\{O\}\]
		has degree one, therefore \(O\) and \(S^{n-1}\) are a link pair, it follows from Theorem~\ref{thm:5.1} that \(Net^{R}:D^{n}\to\mathbb{R}\) with width not greater than \(n\) but with arbitrary depth satisfies
		\[Net(S^{n-1})\cap Net(O)\neq\emptyset\]
		Let \(x\in S^{n-1}\) be the point such that \(Net^{R}(x)=Net^{R}(O)\). Then \(\|Net^{R}(x)-f(x)\|<\delta\) implies that
		\[\|Net^{R}(O)-f(O)\|=\|Net^{R}(x)-f(O)\|\geq\delta,\]
		and vis versa.
		
		A similar statement concerning the deep neural networks with \emph{ReLU} activation was proved by means of analytical approach \cite{hanin2017approximating}.
	\end{proof}
	
	For the deep neural networks with sigmoid activation, we can have the same assertion in view of Theorem~\ref{thm:5.2}.
	
	\begin{corollary}\label{cor:5.2}
		Let \(D^{n}\subset\mathbb{R}^{n}\) be the standard unit ball with \(n>1\), then for every \(\delta>0\), there is a continuous function \(f:D^{n}\to\mathbb{R}\), such that for each \(Net^{\sigma}:D^{n}\to\mathbb{R}\), with width not greater than \(n\) but with arbitrary depth, we have
		\[\|Net^{\sigma}(x)-f(x)\|\geq\delta,\quad x\in D^{n}.\]
	\end{corollary}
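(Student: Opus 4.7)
The plan is to transplant the proof of Corollary~\ref{cor:5.1} almost verbatim to the sigmoid setting, with the activation-specific input upgraded from Proposition~\ref{prop:5.1} (\emph{Relu is group homotopic to the identity}) to the sigmoid analogue needed by Proposition~\ref{prop:5.3}. There are essentially two ingredients: first, check that \(\sigma\) meets the hypothesis of Theorem~\ref{thm:5.1}; second, run the Urysohn and triangle-inequality argument unchanged.

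For the first ingredient, I would establish that the coordinate-wise sigmoid \(\sigma:\mathbb{R}^{k}\to\mathbb{R}^{k}\) is group homotopic to a homeomorphism. The cleanest route mimics the flow construction of Proposition~\ref{prop:5.1}: pick a complete vector field on \(\mathbb{R}^{k}\) whose time-one flow equals \(\sigma\), then set \(H(x,t)=\Phi(x,t)\) as in the example following Definition~\ref{def:3.1}. The semigroup law of the flow yields the group-homotopy condition automatically, and each slice \(\Phi(\cdot,t)\) is a homeomorphism onto its image, matching the shape \(F:\mathbb{R}^{k}\to F(\mathbb{R}^{k})\subset\mathbb{R}^{k}\) required by Theorem~\ref{thm:5.1}. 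Existence of such a flow follows because each scalar \(\sigma:\mathbb{R}\to(0,1)\) is a strictly increasing real-analytic homeomorphism onto its image with a unique hyperbolic attracting fixed point, hence it can be embedded in a smooth one-parameter flow.

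For the second ingredient, I would copy the argument of Corollary~\ref{cor:5.1}. Urysohn's Lemma yields a continuous \(f:D^{n}\to[0,2\delta]\) with \(f(O)=0\) and \(f|_{S^{n-1}}\equiv 2\delta\). The map \(\xi(x,O)=(x-O)/\|x-O\|\) on \(S^{n-1}\times\{O\}\) has degree one, so \(\{O\}\) and \(S^{n-1}\) form a link pair of nonzero degree. By Proposition~\ref{prop:5.3} (equivalently Theorem~\ref{thm:5.1} applied to \(\mu=\sigma\)), every \(Net^{\sigma}:D^{n}\to\mathbb{R}\) of width \(\le n\) satisfies \(Net^{\sigma}(S^{n-1})\cap Net^{\sigma}(\{O\})\ne\emptyset\). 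Picking \(x\in S^{n-1}\) with \(Net^{\sigma}(x)=Net^{\sigma}(O)\), the triangle inequality applied to \(|f(x)-f(O)|=2\delta\) forces at least one of \(\|Net^{\sigma}(x)-f(x)\|\) and \(\|Net^{\sigma}(O)-f(O)\|\) to be \(\ge\delta\), which is the required estimate.

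The main obstacle is the first ingredient. The ODE used in Proposition~\ref{prop:5.1}, \(\dot{x}=\mathrm{Relu}(x)-x\), converges to \(\mathrm{Relu}(x)\) only as \(t\to\infty\), and the paper compresses this into \([0,1]\) via the reparametrization \(t\mapsto t/(1-t)\); the naive analogue \(\dot{x}=\sigma(x)-x\) converges instead to the fixed point of \(\sigma(y)=y\), so the ReLU construction does not port over literally. Either one appeals to a Szekeres-type embedding of the monotone map \(\sigma\) in a smooth flow, or, more economically, one bypasses Theorem~\ref{thm:5.1} at the hidden layers by observing that \(\sigma\) is already a homeomorphism of \(\mathbb{R}^{k}\) onto \((0,1)^{k}\subset\mathbb{R}^{k}\); then each hidden layer of the form \(\sigma\circ A_i\) with \(A_i\) invertible preserves link pairs directly through Proposition~\ref{prop:3.2} and Remark~\ref{rem:3.1}, while the first layer that actually reduces dimension triggers a non-empty intersection via Theorem~\ref{thm:4.2}. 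Either route yields the required failure-to-classify for \(Net^{\sigma}\), after which Corollary~\ref{cor:5.2} follows formally.
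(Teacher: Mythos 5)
Your overall strategy matches what the paper intends: the paper gives no explicit proof of Corollary~\ref{cor:5.2}, merely pointing at Theorem~\ref{thm:5.2} (and, implicitly, Proposition~\ref{prop:5.3}, itself stated without proof), so the content of any proof is exactly the two ingredients you isolate --- verify that \(\sigma\) satisfies the link-preservation hypothesis, then rerun the Urysohn/triangle-inequality argument of Corollary~\ref{cor:5.1} verbatim. Your second ingredient is correct and is precisely the paper's argument. You also correctly flag the real gap the paper glosses over: the ODE trick of Proposition~\ref{prop:5.1} does not transfer to \(\sigma\), since \(\dot{x}=\sigma(x)-x\) flows to the fixed point of \(\sigma\), not to \(\sigma\) itself.

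One caution on your first proposed repair: embedding \(\sigma\) as the time-one map of a flow on \(\mathbb{R}^{k}\) cannot work as literally stated, because every time-\(t\) map of a flow is a homeomorphism of \(\mathbb{R}^{k}\) \emph{onto} \(\mathbb{R}^{k}\), whereas \(\sigma\) maps \(\mathbb{R}^{k}\) onto \((0,1)^{k}\); at best one could hope for a semiflow of non-surjective injections (fractional iteration via Abel's equation), which is a genuinely nontrivial detour. Your second, ``more economical'' route is the right one and is evidently what the paper has in mind: \(\sigma:\mathbb{R}^{k}\to(0,1)^{k}\subset\mathbb{R}^{k}\) is already a homeomorphism onto its image, so Remark~\ref{rem:3.1} (link pairs are preserved by homeomorphisms onto their images) handles every full-width hidden layer directly, and the first dimension-reducing affine layer forces a nonempty intersection by Theorem~\ref{thm:4.2}. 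This bypasses the group-homotopy hypothesis of Theorem~\ref{thm:5.1} entirely for the sigmoid case and is, if anything, cleaner than the paper's unproved appeal to ``sigmoid-type activations are group homotopic to a homeomorphism.'' With that route selected, your argument is complete.
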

	
	Likewise, we have the following assertion for more general activation function.
	
	\begin{theorem}\label{thm:5.2}
		Let \(D^{n}\subset\mathbb{R}^{n}\) be the standard unit ball with \(n>1\), then for every \(\delta>0\), there is a continuous function \(f:D^{n}\to\mathbb{R}\), such that for each \(Net^{\mu}:D^{n}\to\mathbb{R}\), with width not greater than \(n\) but with arbitrary depth, we have
		\[\|Net^{\mu}(x)-f(x)\|\geq\delta,\quad x\in D^{n}.\]
		Provided that the activation function \(\mu:\mathbb{R}^{k}\rightarrow\mathbb{R}^{k}\) is group homotopic to a homeomorphism \(F:\mathbb{R}^{k}\to F(\mathbb{R}^{k})\subset\mathbb{R}^{k}\).
	\end{theorem}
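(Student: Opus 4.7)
My plan is to run exactly the same argument as in Corollary~\ref{cor:5.1}, with the only substantive change being to invoke Theorem~\ref{thm:5.1} (valid for any activation $\mu$ group homotopic to a homeomorphism) in place of the ReLU-specific Proposition~\ref{prop:5.2}. Since Theorem~\ref{thm:5.1} already delivers exactly the classification-failure conclusion needed, the Urysohn-plus-triangle-inequality mechanism transplants without alteration.

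Concretely, I would first apply Urysohn's Lemma to the disjoint compacts $\{O\}$ and $S^{n-1}$ inside $D^n$ to obtain a continuous $f:D^n\to[0,2\delta]$ with $f(O)=0$ and $f\equiv 2\delta$ on $S^{n-1}$. Second, I would verify that $\{O\}$ and $S^{n-1}$ form a link pair of nonzero degree: the normalized difference map
\[\xi(x,O)=\frac{x-O}{\|x-O\|},\quad (x,O)\in S^{n-1}\times\{O\},\]
agrees under the natural identification $S^{n-1}\times\{O\}\cong S^{n-1}$ with the identity of $S^{n-1}$, hence has degree $1$. The hypothesis $n>1$ ensures $S^{n-1}$ is connected so that degree is meaningfully nonzero.

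Third, I would feed $A=\{O\}$, $B=S^{n-1}$, and any $Net^{\mu}:\mathbb{R}^n\to\mathbb{R}$ of width at most $n$ into Theorem~\ref{thm:5.1}, obtaining $Net^{\mu}(O)\in Net^{\mu}(S^{n-1})$, i.e., some $x_{0}\in S^{n-1}$ with $Net^{\mu}(x_{0})=Net^{\mu}(O)$. Supposing for contradiction that $|Net^{\mu}(z)-f(z)|<\delta$ for every $z\in D^n$, the triangle inequality then gives
\[2\delta=|f(x_{0})-f(O)|\leq |f(x_{0})-Net^{\mu}(x_{0})|+|Net^{\mu}(O)-f(O)|<2\delta,\]
the sought contradiction.

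I do not expect any serious obstacle in carrying this out. The only point requiring brief care is the degree computation in the second step (namely that $\xi$ really is the identity of $S^{n-1}$ after the obvious identification), and the mild observation that Theorem~\ref{thm:5.1} is formulated so as to permit one of the two compacts to be a single point. Everything else is a verbatim transcription of the proof of Corollary~\ref{cor:5.1}, with the ReLU-specific Proposition~\ref{prop:5.2} swapped out for the general Theorem~\ref{thm:5.1}; in particular, the restriction $Net^{\mu}\colon D^{n}\to\mathbb{R}$ extends automatically to all of $\mathbb{R}^n$ because each affine-plus-activation layer is defined globally, so no ad hoc extension step is needed.
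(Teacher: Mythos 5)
Your proposal is correct and follows essentially the same route the paper intends: the paper offers no separate proof of Theorem~\ref{thm:5.2}, saying only ``likewise,'' and the intended argument is precisely the one you give --- Urysohn's Lemma for $\{O\}$ and $S^{n-1}$, the degree-one map $\xi$ to establish the link pair, Theorem~\ref{thm:5.1} in place of Proposition~\ref{prop:5.2}, and the triangle inequality. Your write-up of the final contradiction is in fact cleaner than the slightly garbled version in the paper's proof of Corollary~\ref{cor:5.1}.
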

	
	Because the Relu activation, the-leaky Relu activation, the Elu activation, and all sigmoid-type monotone activations are group homotopic to a homeomorphism, the above assertion holds for every deep neural networks based on one of these activations.
	
	\section{A summary}\label{A summary}
	
	In this paper we have present a topological argument on separability of compact sets embedded in the Euclidian space under a specific class DNN dimension reduction maps and provide lower bound of width of DNNs constructed with a broad class of activation functions such as Relu activation, a-leaky Relu activation, Elu activation, and all sigmoid-type monotone activation are group homotopic to a homeomorphism. We hope the argument in this paper may spur further investigation in understanding the black-box of deep neural networks from geometric point of view.
	
	\bibliographystyle{plain}

\end{document}